\newcommand{\br}{\mathbb{R}}
\newcommand{\bz}{\mathbb Z}
\newcommand{\bn}{\mathbb N}
\newcommand{\cc}{\mathcal C}
\newcommand{\vp}{\varphi}
\newcommand{\Si}{\Sigma}
\newcommand{\sig}{\sigma}
\newcommand{\E}{\mathscr E}
\newcommand{\ssm}{\smallsetminus}
\newcommand{\map}[1]{\mathrm{Map}(#1)}
\newcommand{\mappm}[1]{\mathrm{Map}^\pm(#1)}
\newcommand{\pmap}[1]{\mathrm{PMap}(#1)}
\newcommand{\clpmap}[1]{\overline{\mathrm{PMap}_c(#1)}}
\newcommand{\cpmap}[1]{{\mathrm{PMap}_c(#1)}}
\DeclareMathOperator{\Homeo}{Homeo}
\DeclareMathOperator{\genus}{genus}
\DeclareMathOperator{\Aut}{Aut}
\DeclareMathOperator{\Hom}{Hom}
\DeclareMathOperator{\pr}{pr}
\newtheorem{Thm}{Theorem}[section]
\newtheorem{Thm*}{Theorem}
\newtheorem{Prop}[Thm]{Proposition}
\newtheorem{Lem}[Thm]{Lemma}
\newtheorem{Cor}[Thm]{Corollary}
\newtheorem{Cor*}[Thm*]{Corollary}
\newtheorem{Prob}[Thm*]{Problem}
\theoremstyle{definition}
\newtheorem{Def}[Thm]{Definition}
\numberwithin{equation}{section}
\title{The first integral cohomology of  \\ pure mapping class groups}
\author{Javier Aramayona}
\address{Departmento de Matem\'aticas, Universidad Aut\'onoma de Madrid \& Instituto de Ciencias Matem\'aticas, CSIC \\ Madrid, Spain}
\email{javier.aramayona@uam.es}
\author{Priyam Patel}
\address{Department of Mathematics, University of Utah \\ Salt Lake City, UT 84112}
\email{patelp@math.utah.edu}
\author{Nicholas G. Vlamis}
\address{Department of Mathematics, CUNY Queens College \\ Flushing, NY 11367}
\email{nicholas.vlamis@qc.cuny.edu}
\begin{abstract}
It is a classical result that pure mapping class groups of connected, orientable surfaces of finite type and genus at least three are perfect.
In stark contrast, we construct nontrivial homomorphisms from infinite-genus mapping class groups to the integers.
Moreover, we compute the first integral cohomology group associated to the pure mapping class group of any connected orientable surface of genus at least 2 in terms of the surface's simplicial homology.
In order to do this, we show that pure mapping class groups of infinite-genus surfaces split as a semi-direct product.
\end{abstract}
\begin{document}

\maketitle


\vspace{-35pt}
\section{Introduction}

Throughout this article---unless otherwise noted---a surface refers to an orientable, connected, second-countable 2-manifold with (possibly empty) compact boundary. 
Associated to every such surface $S$ is its mapping class group, written $\map S$, which is the group of orientation-preserving homeomorphisms of $S$ up to isotopy, where all homeomorphisms and isotopies fix the boundary of $S$ pointwise. 
The {\em pure mapping class group}, written $\pmap S$, is the subgroup of $\map S$ whose elements induce the trivial permutation on the set of topological ends of $S$. 

The structure of $\map S$ is generally well-understood when $S$ is of finite type, that is, when $\pi_1(S)$ is finitely generated. 
In contrast, infinite-type mapping class groups have been less studied, although there has been a recent surge of activity in exploring their structure. 
The work to date has largely focused on highlighting the similarities between infinite-type mapping class groups and their finite-type counterparts \cite{AFP, BavardHyperbolic, BavardIsomorphisms, DurhamGraphs, PatelAlgebraic, HernandezAlexander}. Nevertheless, the comparison between the two have strong limitations \cite{BavardBig, PatelAlgebraic}.

The goal of this article is to exhibit another fundamental difference between the finite-type and the infinite-type settings. 
It is a classical result, that if $S$ is of finite type and has genus at least 3, then $\pmap S$ is perfect, that is, it has trivial abelianization (see \cite[Section 5.1]{FarbPrimer} for a proof).
It follows that every homomorphism to $\bz$ is trivial. 
Our main result asserts that in most infinite-genus cases, there exist nontrivial homomorphisms to $\mathbb{Z}$.
Let \( T \) denote a torus with an open disk removed, then a \emph{handle} in a surface refers to an embedded copy of \( T \). 
Intuitively, a topological end of a surface is \emph{accumulated by genus} if there is a sequence of handles escaping to the end.

\begin{Thm*}
\label{thm:rank}
Let \( S \) be a surface of genus at least two.
\begin{enumerate}
\item
\( S \) has at most one end accumulated by genus if and only if  \( H^1( \pmap S, \bz) \) is trivial.
\item
Let \( n \in \bn \)\footnote{For clarity, we set \( \bn = \{1, 2, 3, \ldots\} \); in particular, \( 0 \notin \bn \).}. 
The rank of \( H^1(\pmap S, \bz) \) is \( n  \) if and only if \( S \) has \( n+1 \) ends accumulated by genus. 
\item
\( S \) has infinitely many ends accumulated by genus if and only if the rank of \( H^1(\pmap S, \bz) \) is  infinite.
\end{enumerate}
\end{Thm*}

We note that the genus-2 restriction can be reduced to genus-1 by the subsequent work of Domat--Plummer \cite{DomatFirst}.
Moreover, the genus restriction is necessary as pure mapping class groups of (finite-type) planar surfaces surject onto nontrivial free groups; this is discussed further in \cite{DomatFirst} where explicit constructions, unique to infinite-type surfaces, are given.

In Theorem \ref{thm:rank} (and throughout the paper), \( \bz \) is treated as the trivial \( \pmap S \)-module, which allows us to identify \( H^1(\pmap S, \bz) \) with \( \Hom(\pmap S, \bz) \). 

The idea of constructing a nontrivial homomorphism \( \pmap S \to \bz \) is the following:  take a simple closed curve \( \gamma \) on \( S \) such that \( S \ssm \gamma \) is disconnected and such that there is an end accumulated by genus on either side of \( \gamma \) (note that \( \gamma \) is nontrivial in homology).
Now given an element \( f \in \pmap S \), we ``count" the net number of handles that \( f \) moves from the left of \( \gamma \) to the right of \( \gamma \); this will establish a homomorphism to \( \bz \). 
To formally perform this count, we introduce a \( \bz \)-coloring of the subgraph of the curve graph of \( S \) consisting of curves homologous to \( \gamma \); we then see that \( \pmap S \) acts on the colors by translations, yielding a homomorphism to \( \bz \) (this is the content of Section~ \ref{coloring}).
This construction mimics that of Gaster, Greene, and the third author in \cite{GasterColoring} of the coloring homomorphism \( \chi \) from the Torelli group \( \mathcal I (S_g) \) to \( H_1(S_g, \bz/(g-1)\bz) \), where \( S_g \) is the closed genus-\( g \) surface (the coloring homomorphism from \cite{GasterColoring} is itself a novel construction of the Chillingworth homomorphism).

The most basic example of a homeomorphism that has a non-zero net shift of handles from one side of a curve (such as \( \gamma \) above) to the other is the \emph{handle shift}, which was introduced in \cite{PatelAlgebraic} by the second and third authors (see Section \ref{handleshifts}).
In fact, we will see that the nontrivial image of a pure mapping class group in \( \bz \) is always generated by a handle shift.
For an example, let \( L \) be the ladder surface, the two-ended borderless surface with both ends accumulated by genus (see Figure \ref{fig:colors}), and let \( \tau \) be the translation homeomorphism satisyfing \( \langle \tau \rangle \backslash L \) is a torus, then (as we will see below) there is a unique homomorphism \( \vp \co \pmap L \to \bz \) such that \( \vp(\tau) = 1 \) (and if \( \psi\co \pmap L \to \bz \) is any other homomorphism, then \( \psi = n\vp \) for some \( n \in \bz \)). 
A handle shift is a natural generalization of this deck transformation.

Theorem \ref{thm:rank} is a corollary of Theorem \ref{thm:cohomology} below computing the first cohomology group of \( \pmap S \) with integral coefficients, which itself relies on a structural result (Theorem \ref{thm:decomposition} below) decomposing \( \pmap S \) as semi-direct product.
But, before getting to these more technical results, let us discuss some corollaries of Theorem \ref{thm:rank} comparing the finite- and infinite-type settings.

As documented in \cite[Problem 2.11(A)]{Kirby}, an open question posed by Ivanov asks whether mapping class groups of finite-type surfaces virtually surject onto \( \bz \). This question is directly related to another open question \cite[Problem 2.11(B)]{Kirby} asking whether mapping class groups have Kazhdan's property (T); a positive answer to the second implies a positive answer for the first.
By restricting the number of ends accumulated by genus, we see that such epimorphisms exist for a large class of mapping class groups of infinite-type surfaces:

\begin{Cor*}
If a surface \( S \) has finitely many ends with at least two accumulated by genus, then \( \map S \) virtually surjects onto \( \bz \). 
\end{Cor*}  

\begin{proof}
In this case, \( \pmap S \) is finite index and admits an epimorphism to \( \bz \) by Theorem~\ref{thm:rank}. 
\end{proof}

(We should note that property (T) is discussed in the context of locally compact topological groups, which mapping class groups of infinite-type surfaces are not (see \cite{AramayonaBig})).

For another comparison, in the setting of pure mapping class groups of finite-type surfaces, the first author and Souto \cite{AramayonaHomomorphisms} showed that there are many restrictions on the existence of homomorphisms between pure mapping class groups; in particular, assuming certain genus bounds, all such homomorphisms arise from embeddings of surfaces.
In stark contrast, we have:

\begin{Cor*}
Let \( S \) be a surface with at least two ends accumulated by genus.
If \( S' \) is any other surface such that \( \pmap {S'} \) is nontrivial, then there exists a nontrivial homomorphism \( \pmap S \to \pmap{S'} \). 
\end{Cor*}

\begin{proof}
To see this, simply take a nontrivial homomorphism \( \pmap S \to \bz \) and then any nontrivial homomorphism \( \bz \to \pmap{S'} \).
\end{proof}

Let us now turn to our main results from which the above discussion stems. 
In particular, we construct all homomorphisms of the form \( \pmap S \to \bz \).
More concretely, let \( \hat S \) be the surface obtained from \( S \) by ``filling in'' the planar ends and capping off the boundary components of \( S \) (this is made more precise in Section \ref{classification}).
We let \( H_1^{sep}(\hat S, \bz) \) denote the subgroup of \( H_1( \hat S, \bz) \) generated by homology classes that can be represented by simple separating closed curves on the surface.

Equip the group of self-homeomorphisms of \( S \) with the compact-open topology and give \( \map S \) the corresponding quotient topology.
Let \( \cpmap S \) be the subgroup of \( \pmap S \) consisting of compactly-supported mapping classes (see Section \ref{mcg}), or equivalently, the subgroup of \( \pmap S \) generated by Dehn twists.
Let \( \clpmap S \) be the closure of \( \cpmap S \) and note that \( \clpmap S \) is normal in \( \pmap S \), which allows us to define the group \( A_S = \pmap S / \clpmap S \). 
We will prove:

\begin{Thm*}
\label{thm:cohomology}
If \( S \) a surface of genus at least two, then there exists a natural isomorphism between \( H_1^{sep}(\hat S, \bz) \) and \( H^1(\pmap S, \bz) \).
Moreover, every homomorphism from \( \pmap S \) to \( \bz \) factors through \( A_S \).
\end{Thm*}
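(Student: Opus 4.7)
My plan is to first establish the ``moreover'' clause---that every $\phi:\pmap S\to\bz$ factors through $A_S$---and then to construct a natural isomorphism $\Phi:H_1^{sep}(\hat S,\bz)\to H^1(\pmap S,\bz)$.

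For the factoring, I first show $\phi$ vanishes on $\cpmap S$. Since this group is generated by Dehn twists along simple closed curves, it suffices to show $\phi(T_\alpha)=0$ for each such curve $\alpha$. Pick a compact subsurface $\Sigma \subset S$ containing $\alpha$; as $S$ has genus at least $2$, whenever $S$ has infinite genus I may enlarge $\Sigma$ to have genus at least $3$, so Powell's theorem gives that $\pmap \Sigma$ is perfect, forcing $\phi(T_\alpha)=0$ via the inclusion $\pmap \Sigma \hookrightarrow \pmap S$. The finite-genus-$2$ case is handled separately by noting that both sides of the claimed isomorphism vanish there (the abelianization of $\pmap S$ is then torsion, and every separating curve on the closed genus-$2$ surface $\hat S$ bounds). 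To extend vanishing from $\cpmap S$ to its closure $\clpmap S$, I invoke continuity of $\phi$ into the discrete group $\bz$, which in this setting follows from the semidirect-product decomposition $\pmap S \cong \clpmap S \rtimes A_S$ established separately as the key structural result of the paper.

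Next, for a separating simple closed curve $\gamma \subset \hat S$ with $\hat S\ssm\gamma=\hat L\sqcup\hat R$, and $f\in\pmap S$ canonically extended to $\hat f\in\map{\hat S}$, I define $\phi_\gamma(f)\in\bz$ as the signed count of handles transported by $\hat f$ across $\gamma$---the ``flux of genus'' from $\hat L$ to $\hat R$. Isotopy invariance, integrality, and linearity in $[\gamma]\in H_1^{sep}(\hat S,\bz)$ give the natural homomorphism $\Phi$. Injectivity follows by exhibiting, for each nonzero $[\gamma]$, a handle-shift element $\sigma\in\pmap S$ moving a bi-infinite train of handles across $\gamma$ so that $\phi_\gamma(\sigma)=1$. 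The surjectivity step is the main obstacle: having reduced to computing $\Hom(A_S,\bz)$ by the factoring step, I would use the semidirect-product decomposition to describe $A_S$ explicitly as a group of end-preserving handle shifts whose abelianization pairs naturally with $H_1^{sep}(\hat S,\bz)$, and matching this description with the $\phi_\gamma$'s completes the proof.
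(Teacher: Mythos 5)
Your overall architecture matches the paper's (genus-flux homomorphisms $\vp_\gamma$, injectivity via handle shifts, the semidirect splitting $\pmap S = \clpmap S \rtimes A_S$, and reduction of surjectivity to $\Hom(A_S,\bz)$), and your treatment of $\cpmap S$ via Powell is exactly what the paper does. But there is one genuine gap, and it sits at the single most delicate step. You claim that continuity of an arbitrary homomorphism $\phi\co\pmap S\to\bz$ ``follows from the semidirect-product decomposition.'' It does not. The decomposition is a purely algebraic statement and gives no control over the values of an \emph{abstract} homomorphism on $\clpmap S$: knowing $\phi$ vanishes on the dense subgroup $\cpmap S$ says nothing a priori about its values on elements of $\clpmap S \ssm \cpmap S$ (for instance, infinite products of Dehn twists supported on disjoint curves exiting an end), since an abstract homomorphism to $\bz$ need not be continuous. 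The paper closes this gap with an input you are missing entirely: $\pmap S$ and $\clpmap S$ are Polish groups (via the identification of $\mappm S$ with $\Aut(\cc(S))$ and the equivalence of the compact-open and permutation topologies), and Dudley's theorem asserts that \emph{every} abstract homomorphism from a Polish group to $\bz$ is automatically continuous. This automatic-continuity argument is the one step that cannot be recovered from the algebraic decomposition; note that the paper only \emph{conjectures} that $\clpmap S$ is perfect, so no purely algebraic route to $\phi(\clpmap S)=0$ is currently available.

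A secondary omission: in the infinite-rank case $A_S$ is the Baer--Specker group $\bz^{\bn}$, which is not free abelian, and your final step of ``pairing'' $A_S$ with $H_1^{sep}(\hat S,\bz)$ silently requires Specker's theorem that $\Hom(\bz^{\bn},\bz)\cong\bigoplus_{\bn}\bz$ is generated by the coordinate projections. Without this, surjectivity of $\hat\Phi$ is not established when $S$ has infinitely many ends accumulated by genus. Your handling of the genus-$2$ case is also slightly off in emphasis: when $S$ has genus exactly $2$ but infinitely many punctures, $\cpmap S$ is an infinite-type group and one needs that it is a direct limit of groups with \emph{finite} abelianization (Korkmaz), not merely that the closed genus-$2$ case is torsion.
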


The homomorphism in Theorem \ref{thm:cohomology} is constructed and shown to be injective in Proposition~\ref{prop:definition}. Surjectivity is more subtle, and its proof requires a deeper investigation of the structure of PMap(S), together with an application of descriptive set theory (see Theorem \ref{thm:isomorphism}, and the discussion below Corollary \ref{cor:semidirect}).
Applying basic properties of homology, we obtain Theorem~\ref{thm:rank} directly from Theorem~\ref{thm:cohomology}.

A recent result of Bavard, Dowdall, and Rafi \cite{BavardIsomorphisms} proving a conjecture of the second and third author from \cite{PatelAlgebraic} shows that the isomorphism type of either \( \map S \)  or \( \pmap S \) determines the homeomorphism type of \( S \) and vice versa.
Theorem~\ref{thm:rank} serves as another example of the direct link between the algebra of \( \pmap S \) and the topology of \( S \).

In order to prove Theorem \ref{thm:cohomology}, we first prove a structural result about \( \pmap S \).
In \cite[Theorem 4]{PatelAlgebraic}, the second and third author showed that \( \clpmap S = \pmap S \) if and only if \( S \) has at most one end accumulated by genus (recall that \( \cpmap S \) is the group generated by Dehn twists).
Moreover, when \( S \) has more than one end accumulated by genus, it was shown that the set of Dehn twists together with the set of handle shifts (see Section \ref{handleshifts}) generate a dense subgroup of \( \pmap S \).
The next result is a precise measure of the role of handle shifts in the failure of Dehn twists to topologically generate \( \pmap S \).

Let \( H^1_{sep}(\hat S, \bz) \) be the subgroup of \( H^1(\hat S, \bz) \) that is identified with \( \Hom(H_1^{sep}(\hat S, \bz), \bz) \) under the natural isomorphism \( H^1(\hat S, \bz) \to \Hom(H_1(\hat S, \bz), \bz) \) given by the universal coefficient theorem.

\begin{Thm*}
\label{thm:decomposition}
Let \( S \) be a surface with at least two ends accumulated by genus.
There is an injection \( \kappa\co H^1_{sep}(\hat S, \bz) \to \pmap S \) such that \( \pi \circ \kappa \co H^1_{sep}(\hat S, \bz) \to A_S \) is an isomorphism, where \( \pi \co \pmap S \to A_S \) is the natural projection.
Moreover, there exist pairwise-commuting handle shifts \( \{h_i\}_{i=1}^r \) such that
\[
\kappa(H^1_{sep}( \hat S, \bz)) = \prod_{i=1}^r \langle h_i \rangle,
\]
where \( r \in \bn \cup \{\infty\} \) denotes the rank of \( H_1^{sep}( \hat S, \bz) \). 
\end{Thm*}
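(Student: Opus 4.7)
My strategy is to construct $\kappa$ concretely via a pairwise-disjointly supported family of handle shifts, and to define a ``flux'' homomorphism $\phi\co \pmap S \to H^1_{sep}(\hat S, \bz)$ which is a one-sided inverse, so that $\phi\circ\kappa = \mathrm{id}$. This immediately gives injectivity of $\kappa$ and of $\pi\circ\kappa$; the remaining content of the theorem is surjectivity of $\pi\circ\kappa$, equivalent to showing $\ker\phi \subseteq \clpmap S$.

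Using the classification of surfaces and the hypothesis that $\hat S$ has at least two ends accumulated by genus, I first pick pairwise disjoint simple separating curves $\{c_i\}_{i=1}^r$ on $\hat S$ whose homology classes form a basis of $H_1^{sep}(\hat S, \bz)$; by the Corollary, the rank $r$ is one less than the number of ends accumulated by genus, or is $\infty$. For each $i$, I choose a bi-infinite sequence of handles straddling $c_i$ inside a strip $W_i$, arranging the $W_i$ to be pairwise disjoint and locally finite on $S$. Let $h_i$ be a handle shift supported on $W_i$ shifting these handles by one step across $c_i$. Disjoint supports give commutativity, and local finiteness guarantees that for any tuple $(n_i)_i$ the product $\prod_i h_i^{n_i}$ converges in $\pmap S$. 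Identifying $H^1_{sep}(\hat S, \bz)$ with $\bz^r$ via the dual basis to $\{[c_i]\}$, I define $\kappa$ by $(n_i)_i \mapsto \prod_i h_i^{n_i}$; the image is $\prod_{i=1}^r \langle h_i \rangle$ as asserted.

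For $\phi$, given $f \in \pmap S$ and a separating class $v$ represented by a curve $c$, I define $\phi(f)(v)$ as the signed net number of handles that $f$ pushes across $c$; concretely, one may fix a proper arc transverse to $c$ joining ends on opposite sides and compute the algebraic intersection of its image under $f$ with $c$, suitably normalized. Checking independence of the arc and of the representative of $v$, bilinearity, and continuity on $\pmap S$ is essentially routine; continuity together with the vanishing of $\phi$ on Dehn twists forces $\phi$ to vanish on $\clpmap S$. Direct inspection of the model handle shift shows $\phi(h_i)$ is the $i$-th dual basis vector, so $\phi\circ\kappa = \mathrm{id}$, establishing injectivity of both $\kappa$ and $\pi\circ\kappa$.

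The main obstacle is the reverse inclusion $\ker\phi \subseteq \clpmap S$. Given $g \in \ker\phi$, I fix an exhaustion $S_1 \subset S_2 \subset \cdots$ of $S$ by compact subsurfaces and aim to produce $g_n \in \cpmap S$ agreeing with $g$ on $S_n$, so that $g_n \to g$ in the compact-open topology. The vanishing of $\phi(g)$ on every separating class bounding a complementary region of $S_n$ is exactly the obstruction to being able to modify $g$ so that it becomes the identity outside $S_n$: it guarantees that each complementary neighborhood and its image under $g$ have matching end type and genus pattern, since no net handle has been shunted between ends. Making this precise requires the change-of-coordinates principle for infinite-type surfaces together with an extension argument that replaces $g$ on each complementary piece by a compactly supported correction without creating a boundary mismatch; combined with the topological generation of $\pmap S$ by $\cpmap S$ and handle shifts from \cite{PatelAlgebraic}, this upgrades the density statement into the algebraic splitting claimed in the theorem.
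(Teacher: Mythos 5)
Your overall strategy is the same as the paper's: realize a basis of \( H_1^{sep}(\hat S, \bz) \) by pairwise disjoint separating curves, build \( \kappa \) from commuting handle shifts supported on pairwise disjoint strips crossing those curves, define a dual ``flux'' map on \( \pmap S \) that kills \( \clpmap S \) and is a left inverse to \( \kappa \) (the paper's map \( \Psi(a) = a^* \) with \( a^*(v) = \bar\Phi(v)(a) \) is exactly your \( \phi \) descended to \( A_S \)), and reduce surjectivity of \( \pi\circ\kappa \) to the statement that the flux map has kernel exactly \( \clpmap S \). That last statement is Theorem \ref{thm:zero}, which the paper itself defers to the proof of \cite[Theorem 4]{PatelAlgebraic}; your exhaustion-and-correction sketch is that same argument at essentially the same level of detail.

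There is, however, one concrete step that fails as written: computing \( \phi(f)(v) \) as the algebraic intersection number of \( f(\alpha) \) with \( c \), where \( \alpha \) is a proper arc joining ends on opposite sides of the separating curve \( c \). For any such arc that number is \( \pm 1 \) --- it only records that the endpoints of the arc lie on opposite sides of \( c \) --- and since \( f \in \pmap S \) fixes the ends, \( f(\alpha) \) again joins ends on opposite sides, so the quantity is independent of \( f \); the ``normalized'' invariant is identically zero and cannot detect a handle shift. The correct realization of ``net number of handles pushed across \( c \)'' is the one in Section \ref{coloring}: fix a compact subsurface \( R \) with separating boundary containing \( c \cup f(c) \) and take the difference of the genera of the distinguished complementary components of \( f(c) \) and of \( c \) in \( R \) (the coloring \( \phi_\gamma \) and the homomorphism \( \vp_v \) of Proposition \ref{prop:homomorphism}). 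Note also that the independence, additivity, and extension-to-nonsimple-classes checks you describe as routine are precisely Lemmas \ref{lem:additive}--\ref{lem:well-defined} and Proposition \ref{prop:definition}, and they do require the principal-exhaustion bookkeeping. With the genus-difference definition substituted for the arc-intersection one, your argument coincides with the paper's.
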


Note that in Theorem \ref{thm:decomposition} the rank of \( H_1^{sep}(\hat S, \bz) \) can be (countably) infinite, so it is necessary to use the direct product.
As an immediate corollary of Theorem \ref{thm:decomposition}, we obtain:

\begin{Cor*}
\label{cor:semidirect}
For any surface \( S \),
\[ \pmap S = \clpmap S \rtimes \kappa(H^1_{sep}(\hat S, \bz)). \]
\end{Cor*}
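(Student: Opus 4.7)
The plan is to derive this corollary directly from Theorem \ref{thm:decomposition}, after first disposing of the case in which that theorem does not apply. Normality of $\clpmap S$ in $\pmap S$ is already noted in the excerpt, so the remaining content is to verify that the two factors intersect trivially and together generate $\pmap S$.

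First I would handle the case where $S$ has at most one end accumulated by genus. Since the planar ends of $S$ are filled in to form $\hat S$, the surface $\hat S$ inherits at most one end, all of which are accumulated by genus. Any essential simple separating curve in $\hat S$ then divides $\hat S$ into two subsurfaces, at least one of which is compact, exhibiting the curve as the boundary of a compact $2$-chain. Hence every such curve is nullhomologous in $\hat S$, so $H_1^{sep}(\hat S, \bz) = 0$, and by the universal coefficient theorem $H^1_{sep}(\hat S, \bz) = 0$ as well. On the other hand, the quoted result of \cite{PatelAlgebraic} yields $\clpmap S = \pmap S$ in this regime. Both sides of the desired equality therefore collapse to $\pmap S$ and the statement is trivial.

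In the remaining case, $S$ has at least two ends accumulated by genus, so Theorem \ref{thm:decomposition} applies. Writing $K = \kappa(H^1_{sep}(\hat S, \bz))$ and $\pi \co \pmap S \to A_S$ for the quotient map, injectivity of $\kappa$ together with the fact that $\pi \circ \kappa$ is an isomorphism onto $A_S$ forces the restriction $\pi|_K \co K \to A_S$ to be an isomorphism as well. Its kernel, which is precisely $K \cap \clpmap S$, is therefore trivial, and its surjectivity implies that every $g \in \pmap S$ factors as $g = (g k^{-1}) \cdot k$, where $k \in K$ is the unique element with $\pi(k) = \pi(g)$, forcing $g k^{-1} \in \clpmap S$. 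Combined with normality of $\clpmap S$, this produces the internal semidirect product $\pmap S = \clpmap S \rtimes K$.

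There is no genuine obstacle here: the corollary is essentially a translation of Theorem \ref{thm:decomposition} into the language of semidirect products, with the observation about separating curves filling in the degenerate range where that theorem is vacuous.
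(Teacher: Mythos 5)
Your argument is correct and is exactly the route the paper intends: the corollary is stated there as an immediate consequence of Theorem \ref{thm:decomposition}, with the degenerate case (at most one end accumulated by genus) noted separately as a restatement of \cite[Theorem 4]{PatelAlgebraic}, just as you observe via $H_1^{sep}(\hat S,\bz)=0$. Your unpacking of the splitting from the section $\pi\circ\kappa$ is the standard verification and introduces no new ideas beyond the paper's.
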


Note that:
\begin{itemize}
\item
 if \( S \) has at most one end accumulated by genus, then Corollary \ref{cor:semidirect} is a restatement of \cite[Theorem 4]{PatelAlgebraic}.
 \item
 if \( S \) has \( n \) ends accumulated by genus with \( 2\leq n < \infty \), then \( \kappa(H^1_{sep}(\hat S, \bz)) \cong \bz^{n-1} \).
 \item
 if \( S \) has infinitely many ends accumulated by genus, then \( \kappa(H^1_{sep}(\hat S, \bz)) \cong \bz^\bn \), where \( \bz^\bn \) is the Baer-Specker group.
\end{itemize}

The proof of surjectivity in Theorem \ref{thm:cohomology} follows from Corollary \ref{cor:semidirect}, together with a classical result from descriptive set theory. 
It follows from recent work of Hernandez--Morales--Valdez \cite{HernandezIsomorphisms} and Bavard--Dowdall--Rafi \cite{BavardIsomorphisms} on the automorphism group of the curve graph that \( \pmap S \) is Polish, that is, separable and completely metrizable (see Section \ref{polish} for more detail). 
This makes available a result of Dudley \cite[Theorem 1]{DudleyContinuity}, which in our context implies that every homomorphism \( \pmap S \to \bz \) is continuous. 
The subgroup \( \cpmap S \) can be realized as a direct limit of finite-type mapping class groups, each of which is perfect (when the genus is at least 3) and hence is itself perfect. 
Therefore elements of \( H^1(\pmap S,\bz ) \) must vanish on \( \cpmap S \), and hence by Dudley must also vanish on \( \clpmap S \). 
Surjectivity then amounts to specifying the image of a homomorphism restricted to the abelian subgroup \( A_S \cong H^1_{sep}(S, \bz) \).

This is, to the authors' knowledge, the first application of descriptive set theory to the study of mapping class groups.
We note that descriptive set theory has been previously used in the study of homomorphisms of homeomorphism groups of compact manifolds, for instance see the work of Rosendal--Solecki \cite{RosendalAutomatic}, Rosendal \cite{RosendalAutomatic2}, and Mann \cite{MannAutomatic}.

Let us consider the abelianization of \( \pmap S \).
As a direct consequence of Corollary \ref{cor:semidirect}, we have

\begin{Cor*}
\label{cor:abelian}
For any surface \( S \), the group \( A_S = \pmap S / \clpmap S \) is abelian and hence
\[
[\pmap S, \pmap S] \leq \clpmap S,
\]
where \( [\pmap S, \pmap S] \) denotes the commutator subgroup of \( \pmap S \). 
\end{Cor*}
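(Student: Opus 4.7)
The plan is to read the statement off directly from Corollary \ref{cor:semidirect}, using the additional structural information about \( \kappa(H^1_{sep}(\hat S, \bz)) \) provided by Theorem \ref{thm:decomposition}. No new ingredients are needed; the only genuine work has already been done in establishing the semi-direct product decomposition.

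First, by Corollary \ref{cor:semidirect}, we have \( \pmap S = \clpmap S \rtimes \kappa(H^1_{sep}(\hat S, \bz)) \). Passing to the quotient by the normal subgroup \( \clpmap S \) identifies \( A_S = \pmap S / \clpmap S \) with the complementary factor \( \kappa(H^1_{sep}(\hat S, \bz)) \). In the degenerate case in which \( S \) has at most one end accumulated by genus, \( \clpmap S = \pmap S \) by \cite[Theorem 4]{PatelAlgebraic} and \( A_S \) is the trivial group, so the statement is vacuous.

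Next, assume \( S \) has at least two ends accumulated by genus, so that Theorem \ref{thm:decomposition} applies. It provides pairwise commuting handle shifts \( \{h_i\}_{i=1}^r \) with \( \kappa(H^1_{sep}(\hat S, \bz)) = \prod_{i=1}^r \langle h_i \rangle \). Since the generators commute pairwise, this direct product is abelian, and therefore so is \( A_S \).

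Finally, because the quotient \( \pmap S / \clpmap S = A_S \) is abelian, the natural projection \( \pi \co \pmap S \to A_S \) annihilates every commutator. Hence \( [\pmap S, \pmap S] \subseteq \ker \pi = \clpmap S \), which yields the displayed inclusion. The only ``obstacle'' is conceptual rather than technical: one must remember to treat the two cases separately depending on the number of ends accumulated by genus, but once Corollary \ref{cor:semidirect} and Theorem \ref{thm:decomposition} are in hand, the verification is formal.
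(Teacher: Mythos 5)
Your proposal is correct and matches the paper's (implicit) argument: the paper states this corollary as a direct consequence of Corollary \ref{cor:semidirect}, with the abelianness of \( A_S \cong \kappa(H^1_{sep}(\hat S, \bz)) \) coming from Theorem \ref{thm:decomposition} exactly as you describe (or, even more directly, from the fact that \( \pi \circ \kappa \) is an isomorphism from the abelian group \( H^1_{sep}(\hat S, \bz) \) onto \( A_S \)). Your case split for surfaces with at most one end accumulated by genus is a sensible bit of extra care, and the deduction that \( [\pmap S, \pmap S] \leq \ker \pi = \clpmap S \) is standard and correct.
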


In the first version of this article, it was conjectured that \( \clpmap S \) is perfect when the genus of \( S \) is at least 3 and the abelianization of \( \pmap S \) is \( A_S \); however, Domat \cite{DomatBig} has disproven this conjecture and so we give the following problem:

\begin{Prob}
Let \( S \) be an infinite-type surface.
Compute the abelianization of \( \clpmap S \) and \( \pmap S \).
\end{Prob}

\subsection*{Acknowledgements}
\vspace{-12pt}
The authors thank Kasra Rafi and Justin Lanier for comments on an earlier draft.
The third author thanks Chris Leininger for a helpful and influential conversation predating the project.
The authors thank the referees for their valuable comments improving the exposition of this paper.

This project began during a visit of the third author to the first author's institution that was funded by the GEAR Network and as such:
the third author acknowledge(s) support from U.S. National Science Foundation grants DMS 1107452, 1107263, 1107367 "RNMS: GEometric structures And Representation varieties" (the GEAR Network).
The first author was supported by grants RYC-2013-13008 and MTM2015-67781.
The second author was supported by NSF DMS 1812014 \& 1937969.
The third author was supported in part by NSF RTG grant 1045119. 

\section{Preliminaries}
\label{background}

All surfaces appearing in the statements of results are assumed to be Hausdorff, connected, orientable, second countable, and to have (possibly empty) compact boundary; however, in proofs, there are several occasions where we will allow surfaces to have non-compact boundary.
The notion of a simple closed curve is critical to the study of mapping class groups, so for clarity, a simple closed curve on a surface \( S \) is an embedding of the circle \( \mathbb{S}^1 \) in \( S \).
A simple closed curve is \emph{trivial} if it is homotopic to a point;  it is \emph{peripheral} if it is either homotopic to a boundary component or bounds a once-punctured disk; it is \emph{separating} if its complement is disconnected. 

We will routinely abuse notation and conflate a mapping class with a representative homeomorphism and a simple closed curve with its isotopy class.

\subsection{Topological ends and the classification of surfaces}
\label{classification}

The classification of surfaces of infinite type relies on the notion of a topological end (first introduced by Freudenthal).
For a reference on the following material see \cite{RichardsClassification}.

\begin{Def}
The \emph{space of ends} of a surface \( S \), denoted \( \E(S) \), is the set
\[
\E(S) = \projlim_K(\pi_0(S\ssm K)),
\]
where the projective limit is taken over the set of all compact subsets of \( S \), which is directed with respect to inclusion.
Further, by equipping \( \pi_0( S\ssm K) \) with the discrete topology for each compact set \( K \), the set \( \E(S) \) can be given the limit topology, that is, the coarsest topology such that the projection maps \( \vp_K \co \E(S) \to \pi_0(S\ssm K) \) given by the universal property of projective limits  are continuous.

The \emph{Freudenthal compactification} of  \( S \), denoted \( \bar S \), is the set \( \bar S = S \sqcup \E(S) \) equipped with the topology generated by the basis consisting sets of the form \( U \cup U^* \), where \( U \) is an open set of \( S \) with compact boundary and \( U^* = \vp_{\partial U}^{-1}([U]) \). 
\end{Def}

An end \( e \) of \( S \) is \emph{accumulated by genus} if every neighborhood of \( e \) in \( \bar S \) has infinite genus.
The set of all ends accumulated by genus is a closed subset of \( \E(S) \) and is denoted \( \E_\infty(S) \).
Each end  \( e \in \E(S) \ssm \E_\infty(S) \) is \emph{planar}, that is, \( e \) has a neighborhood in \( \bar S \) homeomorphic to a disk.
The surface
\[
\hat S = \bar S \ssm \E_\infty(S)
\] 
is a surface in which every end is accumulated by genus and can be thought of as being obtained from \( S \) by ``forgetting'' or ``filling in'' the planar ends.
(If \( S \) has finite genus, then \( \bar S = \hat S \).)

The classification of surfaces states that a surface \( S \) is determined up to homeomorphism by the quadruple \( (g,b,\E(S), \E_\infty(S)) \), where \( g \) is the genus of \( S \) and \( b \) is the number of boundary components of \( S \).

\subsection{Mapping class groups}
\label{mcg}

Let \( \Homeo_\partial (S) \) denote the group of self-homeomorphisms of \( S \) that fix the boundary of \( S \) pointwise.
The \emph{extended mapping class group} of \( S \), denoted \( \mappm S \), is defined to be 
\[
\mappm S = \Homeo_\partial( S ) / \sim,
\]
where \( f \sim g \) if they are isotopic through homeomorphisms fixing \( \partial S \) pointwise.
If \( \partial S \) is empty, then the \emph{mapping class group}, denoted \( \map S \), is the index two subgroup of orientation-preserving mapping classes; otherwise, every element of \( \Homeo_\partial (S) \) is orientation-preserving and we identify \( \map S \) with \( \mappm S \). 
The \emph{pure mapping class group} is the kernel of the action of \( \map S \) on \( \E(S) \).

Given \( f \in \Homeo_\partial(S) \), we say \( f \) is \emph{compactly supported} if the closure of the set \( \{ x \in S \co f(x) \neq x \} \), called the \emph{support of \( f \)}, has compact closure, or equivalently, if \( f \) is the identity outside of a compact set.
We say a mapping class is \emph{compactly supported} if it has a compactly-supported representative.

The \emph{compactly-supported pure mapping class group} of \( S \), denoted \( \cpmap S \), is the subgroup of \( \map S \) consisting of compactly-supported mapping classes.

Equipped with the compact-open topology, \( \Homeo_\partial (S) \) becomes a topological group.
The \emph{compact-open topology} on \( \mappm S \) is defined to be the quotient topology induced by the epimorphism
\[
\mathrm{Homeo}_\partial(S) \to \mappm S.
\]
We will view any subgroup of \( \mappm S \) as a topological group by equipping it with the subspace topology, which will also be referred to as the compact-open topology.

\subsection{Handle shifts}
\label{handleshifts}

The abelian group in the decomposition of \( \pmap S \) presented in Corollary \ref{cor:semidirect} is generated by handle shifts, which were introduced in \cite{PatelAlgebraic}.  
We recall their definition now:
Let \( \Sigma \) be the surface obtained by gluing handles onto \( \br \times [-1,1] \) periodically with respect to the transformation \( (x,y) \mapsto (x+1,y) \).
Let \( \sig \co \Sigma \to \Sigma \) be the homeomorphism up to isotopy determined by requiring
\begin{enumerate}
\item \( \sig(x,y) = (x+1, y) \) for \( (x,y) \in \br \times [-1+\epsilon, 1-\epsilon] \) for some \( \epsilon > 0 \) and
\item \( \sig(x,y) = (x,y) \) for \( (x,y) \in \br \times  \{-1,1\} \). 
\end{enumerate}

We say a homeomorphism \( h\co S \to S \)  is a \emph{handle shift} if there exists a proper embedding \( \iota\co \Sigma \to S \) such that 
\[
h(x) = \left\{
\begin{array}{ll}
(\iota \circ \sig \circ \iota^{-1})(x) & \text{if } x \in \iota(\Sigma) \\
x & \text{otherwise}
\end{array}\right.
\]

We will say that $h$ is {\em supported} on $\iota(\Sigma)$, or simply on $\Sigma$, to relax notation. 
Since the embedding \( \iota \) is required to be proper, it induces a map \( \iota_\infty \co \E(\Si) \to \E_\infty(S) \).
We therefore see that a handle shift \( h \) has an attracting and repelling end, which we denote \( h^+ \) and \( h^- \), respectively.
That is, for any \( x \) in the interior of \( \iota(\Sigma) \) we have
\[
\lim_{n\to\infty}  h^n(x) = h^+ \text{ and } \lim_{n\to-\infty}  h^n (x) = h^{-},
\]
where the limit is taken in the Freudenthal compactification \( \bar S \) of \( S \).
Note that if \( h_1 \) and \( h_2 \) are homotopic handle shifts, then \( h_1^+ = h_2^+ \) and \( h_1^- = h_2^- \); therefore, there are well-defined notions of attracting and repelling ends for the mapping class associated to a handle shift. 
Note that it is possible for \( h^+ = h^- \); in this case, as a corollary to Theorem \ref{thm:cohomology}, \( h \) is a limit of compactly supported mapping classes.

\begin{figure}[t]
\center
\includegraphics{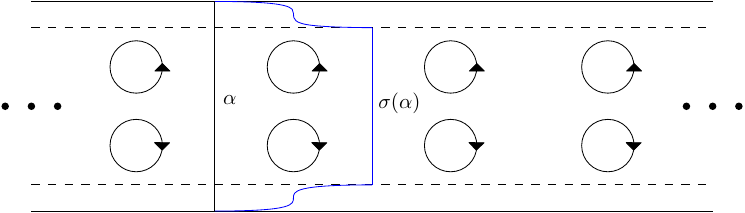}
\caption{The circles are identified vertically to obtain \( \Sigma \).}
\label{fig:handleshift}
\end{figure}

\subsection{Polish groups}
\label{polish}

A topological group is \emph{Polish} if its underlying topological space is separable and completely metrizable.
(See \cite{KechrisClassical} for a reference to the theory of Polish spaces/groups and for the following material.)
Let \( \Gamma \) be a graph with a countable vertex set.
Given any subset \( A \) of \( \Gamma \), define
\[
U(A) = \{ g \in \Aut(\Gamma) \co g(a) = a \text{ for all } a \in A \}.
\]
We can then equip \( \Aut(\Gamma) \) with the \emph{permutation topology}, which is generated by all \( \Aut(\Gamma) \)-translates of the pointwise stabilizers \( U(A) \), where \( A \) is a finite subset of \( \Gamma \).
This makes \( \Aut(\Gamma) \) into a topological group; furthermore, it is clear that \( \Aut(\Gamma) \) is separable (in fact, it is second countable).

\begin{Lem}
\label{lem:polish}
\( \Aut(\Gamma) \) with the permutation topology is Polish.
\end{Lem}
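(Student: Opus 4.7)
The plan is to realize \( \Aut(\Gamma) \) as a closed subgroup of the full symmetric group \( \sym(V) \) on the countable vertex set \( V = V(\Gamma) \) (equipped with its own permutation topology), and then invoke two standard facts from descriptive set theory: (i) \( \sym(V) \) is a Polish group whenever \( V \) is countable, and (ii) every closed subgroup of a Polish group is Polish. The identification of topologies on \( \Aut(\Gamma) \) is immediate, since the pointwise stabilizers \( U(A) \) of finite sets \( A \subset V \) form a sub-basis on both sides.

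For (i), since \( V \) is countable and discrete, the product \( V^V \) with the product topology is Polish. The set of bijections can be written as
\[
\sym(V) = \Bigl(\bigcap_{u \neq v}\{g : g(u) \neq g(v)\}\Bigr) \cap \Bigl(\bigcap_{v \in V}\bigcup_{u \in V}\{g : g(u) = v\}\Bigr),
\]
and since each basic event \( \{g : g(u) = v\} \) is clopen in \( V^V \), this presents \( \sym(V) \) as a \( G_\delta \) subset of \( V^V \), hence Polish by Alexandrov's theorem. Upgrading to a Polish \emph{group} requires one mild adjustment: composition is continuous in the product subspace topology but inversion need not be, so one embeds \( \sym(V) \) as a closed subgroup of \( V^V \times V^V \) via \( g \mapsto (g, g^{-1}) \) to obtain a compatible complete two-sided metric that generates the permutation topology.

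For (ii), I would observe that for each ordered pair \( u, v \in V \) the evaluation map \( g \mapsto (g(u), g(v)) \) is continuous, and the edge relation is clopen in the discrete space \( V \times V \). Consequently each condition ``\( uv \in E(\Gamma) \Leftrightarrow g(u)g(v) \in E(\Gamma) \)'' cuts out a clopen subset of \( \sym(V) \). Taking the countable intersection over all pairs \( (u,v) \) exhibits \( \Aut(\Gamma) \) as a closed subset of \( \sym(V) \), hence as a closed subgroup of a Polish group, hence Polish.

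The only real subtlety is the inversion-continuity issue noted at the end of the second paragraph; every other step is formal bookkeeping, so I do not anticipate any genuine obstacle.
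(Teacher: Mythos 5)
Your proof is correct, and it reaches the same destination by a slightly different route. The paper works directly on \( \Aut(\Gamma) \): it writes down the left-invariant ultrametric \( d \) coming from an enumeration of the vertices, observes that \( d \) alone is not complete, and symmetrizes to \( \rho(g,g') = d(g,g') + d(g^{-1},(g')^{-1}) \), leaving the verification as an exercise. You instead factor through the full symmetric group: \( \sym(V) \) is \( G_\delta \) in \( V^V \) (hence Polish by Alexandrov), \( \Aut(\Gamma) \) is cut out by countably many clopen conditions and is therefore a closed subgroup, and closed subgroups of Polish groups are Polish. The essential device is the same in both arguments --- your embedding \( g \mapsto (g, g^{-1}) \) into \( V^V \times V^V \) is exactly the paper's passage from \( d \) to \( \rho \) --- but your version outsources the completeness verification to standard facts, which is arguably cleaner, at the cost of invoking more machinery. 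One small correction: inversion \emph{is} continuous on \( \sym(V) \) in the subspace topology inherited from \( V^V \) (if \( g_n \to g \) pointwise with all maps bijective, then \( g_n(g^{-1}(v)) \) is eventually \( v \), so \( g_n^{-1}(v) \) is eventually \( g^{-1}(v) \)); the genuine issue your two-sided embedding repairs is not continuity of inversion but completeness of the induced metric, since a \( d \)-Cauchy sequence of bijections can converge in \( V^V \) to a non-surjective injection. This misdiagnosis does not affect the validity of your argument, as the fix you apply resolves the actual problem.
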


\begin{proof}
As \( \Gamma \) is countable, choose an enumeration \( \{x_i\}_{i\in \bn} \) of \( \Gamma \).
Let \( d \co \Aut(\Gamma) \times \Aut(\Gamma) \to \br \) be given by
\[
d(g,g') = \inf_{n\in \bn\cup\{0\}} \{2^{-n} \co g(x_i) = g'(x_i) \text{ for all } i < n \}.
\]
It turns out that \( d \) is not complete; however, we leave as an exercise to check that \( \rho(g,g') = d(g,g') + d(g^{-1},(g')^{-1}) \) is a complete metric and the corresponding metric topology is  the permutation topology.
\end{proof}

Let us turn back to mapping class groups.
Let \( \cc(S) \) denote the curve graph of \( S \), that is, the graph whose vertices consist of nontrivial non-peripheral isotopy classes of simple closed curves on \( S \) and where two vertices are adjacent if they have disjoint representatives.
The following recent theorem is a generalization of a result of Ivanov \cite{IvanovAutomorphisms} (see also \cite{LuoAutomorphisms}) in the finite-type setting:

\begin{Thm}[Hernandez--Morales--Valdez \cite{HernandezIsomorphisms}, Bavard--Dowdall--Rafi \cite{BavardIsomorphisms}]
\label{thm:curves}
If \( S \) is of infinite type and without boundary, then \( \Aut(\cc(S)) = \mappm S \).
\end{Thm}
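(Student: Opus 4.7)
The plan is to show that the natural action homomorphism $\Phi \co \mappm S \to \Aut(\cc(S))$ is an isomorphism. For injectivity, fix an exhaustion of $S$ by finite-type subsurfaces $S_1 \subset S_2 \subset \cdots$ with $\bigcup S_n = S$, each chosen to be incompressible with negative Euler characteristic and with essential, non-peripheral boundary in $S$. A mapping class $f$ in the kernel of $\Phi$ fixes the isotopy class of every simple closed curve; in particular, it preserves each $S_n$ up to isotopy, so Ivanov's finite-type theorem applied to $S_n$ shows $f|_{S_n}$ is isotopic to the identity. A limiting argument, or iterative adjustment by isotopies supported in collars of $\partial S_n$, then gives that $f$ itself is trivial.

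For surjectivity, let $\alpha \in \Aut(\cc(S))$. The strategy is to reconstruct a self-homeomorphism of $S$ realizing $\alpha$ by reducing to Ivanov's theorem on each stage of the exhaustion. The key combinatorial step is to give intrinsic characterizations, in terms of $\cc(S)$ alone, of topological features such as: a curve being separating versus nonseparating; a finite set of disjoint curves bounding a pair of pants; and more generally, a finite curve system filling a finite-type subsurface of a specified topological type. Working through these, one shows that $\alpha$ carries the full subgraph of $\cc(S)$ spanned by curves contained in $S_n$ onto the analogous subgraph for some finite-type subsurface $S'_n \subset S$ of the same topological type as $S_n$. Ivanov's finite-type theorem then produces a homeomorphism $f_n \co S_n \to S'_n$ realizing the restriction of $\alpha$.

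The main obstacle is compatibility across the exhaustion: for $n < m$, one needs $f_m|_{S_n}$ to be isotopic in $S$ to $f_n$. This follows from the uniqueness clause in Ivanov's theorem together with the observation that $f_n$ and $f_m|_{S_n}$ induce the same map on $\cc(S_n)$; the delicate point is choosing representatives and isotopies so that these identifications are \emph{coherent} along the whole nested sequence. After such adjustments, the sequence $\{f_n\}$ converges in the compact-open topology to a homeomorphism $f \co S \to S$. Since every simple closed curve on $S$ is compact, it lies in some $S_n$, so $\Phi(f)$ agrees with $\alpha$ on all of $\cc(S)$. I expect the hardest ingredients to be the uniform combinatorial recognition of finite-type subsurfaces inside an infinite-type curve graph (where, for instance, the link of a vertex is no longer finite-dimensional) and the simultaneous coherent selection of isotopies in the inverse-limit style gluing.
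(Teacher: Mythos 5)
The paper does not prove this statement: Theorem \ref{thm:curves} is quoted with attribution to Hern\'andez--Morales--Valdez \cite{HernandezIsomorphisms} and Bavard--Dowdall--Rafi \cite{BavardIsomorphisms}, and no argument for it appears in the text. So there is no internal proof to compare against; your proposal has to be judged as a proof of the cited result itself.

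Your outline does follow the broad strategy of the cited papers (exhaust \( S \) by finite-type subsurfaces, reduce to Ivanov's finite-type rigidity, and glue), but as written it is a plan rather than a proof, and the steps you defer are exactly where the content lies. Concretely: (a) the ``intrinsic characterizations'' of separating versus nonseparating curves, pants, and filling systems for a finite-type subsurface are asserted but not carried out, and the finite-type arguments for these do not transfer verbatim --- links of vertices in \( \cc(S) \) are infinite and the complement of a curve is of infinite type, so one must re-derive which topological types are \( \Aut(\cc(S)) \)-invariant; (b) you need to show that \( \alpha \) sends the span of curves contained in \( S_n \) into the span of curves contained in \emph{some} finite-type subsurface at all (i.e.\ that ``supported in a common compact subsurface'' is detectable from the graph), and separately that the images \( S_n' \) exhaust \( S \), since otherwise the limit map need not be surjective; (c) Ivanov's theorem fails or has exceptional behavior in low complexity (spheres with at most four punctures, tori with at most two punctures, the closed genus-two surface), so the exhaustion must be arranged to avoid these cases; and (d) the coherence of the isotopies along the nested sequence, which you flag as delicate, is a genuine inverse-limit selection problem that must be resolved before one can claim convergence in the compact-open topology. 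For injectivity your sketch is essentially the Alexander method for infinite-type surfaces (compare Proposition \ref{prop:equivalence}), and that part is fine modulo the same low-complexity caveats. In short, the architecture is correct and matches the literature, but the proposal leaves the hard combinatorial recognition results and the gluing coherence as black boxes, so it does not yet constitute a proof.
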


When \( S \) is borderless, the identification of \( \mappm S \) with \( \Aut(\cc(S)) \) gives a permutation topology on \( \mappm S \).
A straightforward application of the Alexander Method \cite[Proposition 2.8]{FarbPrimer} yields:

\begin{Prop}
\label{prop:equivalence}
Let \( S \) be a borderless surface.
The compact-open topology and  the permutation topology are the same topologies on \( \mappm S \).
\end{Prop}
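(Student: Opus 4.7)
The plan is to verify that the identity map on \(\mappm S\) is continuous in both directions at the identity element; since both are topological group topologies, continuity at the identity suffices.

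For the inclusion of the permutation topology into the compact-open topology, I would show that each basic neighborhood \(U(A)\) of the identity in the permutation topology is compact-open open, where \(A=\{\al_1,\ldots,\al_n\}\) is a finite collection of isotopy classes of essential simple closed curves. To do this, I would pick pairwise disjoint open annular neighborhoods \(N_i\) of a representative of each \(\al_i\), and consider
\[
W=\{f\in \Homeo_\partial(S) : f(\al_i)\subset N_i\text{ for each } i=1,\ldots,n\}.
\]
This is a finite intersection of subbasic compact-open sets containing the identity, and any simple closed curve contained in \(N_i\) is isotopic to \(\al_i\); hence the image of \(W\) in \(\mappm S\) is contained in \(U(A)\).

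For the reverse inclusion, I would start with a basic compact-open neighborhood of the identity. After combining finitely many subbasic generators, I may assume it contains the image of a set of the form \(\{f\in\Homeo_\partial(S):f(S_0)\subset V\}\), where \(S_0\subset S\) is a compact subsurface (which may be enlarged to contain \(\partial S\)) and \(V\) is an open neighborhood of \(S_0\). The idea is then to apply the finite-type Alexander Method, as stated in \cite[Proposition 2.8]{FarbPrimer}, on a slight enlargement \(S_0\subset S_0'\subset V\): this produces a finite set \(A\) of simple closed curves forming an Alexander system on \(S_0'\), so that any mapping class of \(S_0'\) fixing each element of \(A\) is represented by a homeomorphism isotopic, rel boundary, to the identity. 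Then for any \([f]\in U(A)\), there is a representative whose restriction to \(S_0'\) is isotopic rel \(\partial S_0'\) to the identity, and the isotopy can be extended by the identity outside \(S_0'\); modifying \(f\) accordingly gives a representative sending \(S_0\) into \(V\), so \(U(A)\) is contained in the given compact-open neighborhood.

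The main obstacle is the second inclusion: one must convert ``\(f\) preserves the isotopy classes of finitely many curves on \(S\)'' into ``\(f\) has a representative sending a prescribed compact set into a prescribed open neighborhood''. The Alexander Method is exactly the tool that bridges this gap for finite-type surfaces, and the extra ingredient is to carry out the argument on a slight enlargement of \(S_0\) inside \(V\) so that the witnessing isotopy may be taken with support in \(V\), allowing us to conclude without disturbing the behavior of \(f\) outside this compact region.
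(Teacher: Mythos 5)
Your strategy coincides with the paper's, which simply invokes the Alexander Method: trap the images of curves in annular neighborhoods for one inclusion, and convert ``fixes finitely many isotopy classes'' into ``has a representative that is standard on a prescribed compact subsurface'' for the other. The reduction of a basic compact--open neighborhood of the identity to a single pair \( (S_0, V) \) is fine (take \( S_0 \) a compact subsurface neighborhood of the union of the compact sets and \( V \) the intersection of the open sets). Two steps are misstated, though only one needs real repair. First, when the \( \al_i \) intersect essentially you cannot choose the annular neighborhoods \( N_i \) to be pairwise disjoint; since disjointness is never used, simply drop it. Second, and more substantively, the Alexander Method does \emph{not} say that a mapping class of \( S_0' \) fixing an Alexander system is isotopic to the identity rel \( \partial S_0' \): a Dehn twist about a curve parallel to a component of \( \partial S_0' \) fixes the isotopy class of every simple closed curve in \( S_0' \) yet is nontrivial rel boundary, so the witnessing isotopy may move \( \partial S_0' \) and cannot be extended by the identity outside \( S_0' \). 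The repair is routine: use the correct, weaker conclusion that \( f|_{S_0'} \) is isotopic to the identity as a self-homeomorphism of \( S_0' \) with the boundary allowed to move, and extend that isotopy to \( S \) via the isotopy extension theorem, supported in a small neighborhood of \( S_0' \); the modified representative is then the identity on \( S_0' \supseteq S_0 \), which suffices. (In fact, once \( f \) has been isotoped to map \( S_0' \) onto itself --- which already follows from fixing the boundary classes of \( S_0' \) together with a filling system inside, the latter being needed to rule out \( f \) carrying \( S_0' \) to a different complementary component --- one gets \( f(S_0) \subset S_0' \subset V \) with no further work.)
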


As every closed subspace of a Polish space is Polish we obtain:

\begin{Cor}
\label{cor:polish}
Every closed subgroup of \( \mappm S \) is Polish.
In particular, \( \map S \), \( \pmap S \), and \( \clpmap S \) are Polish.
\end{Cor}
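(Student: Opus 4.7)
The plan is to derive the corollary directly from the three results just established, together with the standard fact from descriptive set theory that a closed subspace of a Polish space is Polish (a complete metric restricts to a complete metric on a closed subset, and subspaces of separable metric spaces are separable). If \( S \) is of finite type, then \( \mappm S \) is countable and discrete, so the statement is trivial, and I would therefore assume \( S \) is of infinite type. Since \( S \) is second-countable, the vertex set of \( \cc(S) \) is countable, so Lemma \ref{lem:polish} asserts that \( \Aut(\cc(S)) \) is Polish in the permutation topology. By Theorem \ref{thm:curves} this group is identified with \( \mappm S \), and by Proposition \ref{prop:equivalence} the identification is a homeomorphism onto \( \mappm S \) equipped with the compact-open topology. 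Thus \( \mappm S \) is Polish, and the first sentence of the corollary follows from the descriptive set theory fact above.

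It remains to verify that each of \( \map S \), \( \pmap S \), and \( \clpmap S \) is closed in \( \mappm S \). The subgroup \( \map S \) is either all of \( \mappm S \) or has index two in it, so it is clopen; and \( \clpmap S \) is closed by construction, being the closure of \( \cpmap S \). The only remaining point, and the main obstacle, is to show that \( \pmap S \) is closed.

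The plan for \( \pmap S \) is to exhibit it as the kernel of a continuous homomorphism into a Hausdorff group. A natural target is \( \Homeo(\Ends{S}) \), since the end space \( \Ends{S} \) is compact, Hausdorff, totally disconnected, and metrizable, making \( \Homeo(\Ends{S}) \) Hausdorff in the compact-open topology. The real work is checking that the action homomorphism \( \rho \colon \mappm S \to \Homeo(\Ends{S}) \) is continuous. Given \( f_0 \in \mappm S \), an end \( e \), and a clopen neighborhood \( V \) of \( f_0(e) \) in \( \Ends{S} \), I would choose a compact subsurface \( K \subset S \) whose complementary components refine the clopen partition \( \{V, \Ends{S} \setminus V\} \); then any mapping class \( g \) agreeing with \( f_0 \) on a compact neighborhood of \( K \cup f_0(K) \) --- an open condition in the compact-open topology on \( \mappm S \) --- will send \( e \) into \( V \). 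Combining this local control with compactness of \( \Ends{S} \) yields continuity of \( \rho \). Since \( \pmap S = \map S \cap \ker \rho \) is then the intersection of two closed subgroups, the proof is complete.
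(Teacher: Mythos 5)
Your argument for the borderless case is exactly the paper's argument: combine Lemma \ref{lem:polish}, Theorem \ref{thm:curves}, and Proposition \ref{prop:equivalence} to conclude that \( \mappm S \) is Polish, then use the fact that a closed subspace of a Polish space is Polish. The extra paragraph verifying that \( \pmap S \) really is closed in \( \mappm S \) --- via continuity of the action on the compact Hausdorff space \( \Ends{S} \) --- fills in a detail the paper leaves implicit, and your sketch of it is correct. One small slip there: ``index two, hence clopen'' is not a valid inference for topological groups (an index-two subgroup can be dense, e.g.\ in \( \br \) viewed as a \( \mathbb{Q} \)-vector space); what actually makes \( \map S \) clopen is that orientation-preservation is already an open-and-closed condition on \( \Homeo_\partial(S) \), so \( \map S \) is the complement of the open coset of orientation-reversing classes.

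The genuine gap is the case of nonempty boundary. You split into finite type versus infinite type, but the dichotomy that matters is borderless versus bounded. For an infinite-type surface with \( \partial S \neq \emptyset \), the natural map \( \mappm S \to \Aut(\cc(S)) \) is not injective: a Dehn twist about a boundary-parallel curve is a nontrivial mapping class (homeomorphisms and isotopies must fix \( \partial S \) pointwise), yet it fixes every vertex of \( \cc(S) \), since any simple closed curve can be freely isotoped off a collar of the boundary. Consequently Theorem \ref{thm:curves} cannot be applied verbatim to such \( S \), and the permutation topology pulled back to \( \mappm S \) is not even Hausdorff, so Proposition \ref{prop:equivalence} fails as stated in this setting. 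This is precisely why the paper's proof splits into cases: for borderless \( S \) it argues as you do, and for \( S \) with boundary it embeds \( S \) into a larger borderless surface \( S' \) so that \( \map S \) sits as a closed subgroup of \( \map{S'} \), then invokes the borderless case. Your proof needs this extra step (or some substitute) to cover surfaces with compact boundary, which the paper's standing conventions explicitly allow.
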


\begin{proof}
If \( S \) is borderless, then the result is an immediate consequence of Lemma \ref{lem:polish}, Theorem \ref{thm:curves}, and Proposition \ref{prop:equivalence}.
If \( S \) has boundary, then it is straightforward to embed \( S \) in a larger borderless surface \( S' \) so that \( \map S \) is a closed subgroup of \( \map{S'} \). 
\end{proof}


\section{Colorings and homomorphisms}
\label{coloring}

In this section, we will use colorings of subgraphs of the curve graph to build nontrivial homomorphisms of pure mapping class groups of infinite-genus surfaces to \( \bz \).
As noted in the introduction, the graph colorings given here are analogs of  the colorings presented in \cite[Section 5]{GasterColoring} and, similarly, the homomorphisms constructed in this section mimic the coloring homomorphisms of \cite[Section 6]{GasterColoring}.

Let \( S \) be any surface.
For a homology class  \(v \in H_1(S, \bz) \), we will regularly attribute properties of a representative curve to \( v \); for instance, \( v \) is \emph{simple} if it there exists a simple closed curve on \( S \) representing \( v \).
An element of \( H_1(S, \bz) \) is \emph{primitive} if it cannot be written as a positive multiple of another element.
If a surface is compact with at most one boundary component or non-compact, borderless, and with at most one end, then every primitive homology class is simple (see \cite[Proposition 6.2]{FarbPrimer}); however, in all other cases there are primitive homology classes that fail to be simple (indeed, if \( v \) and \( w \) are two linearly independent peripheral homology classes, then \( v+2w \) is primitive, but not simple). 

Let \( \gamma \) be an oriented separating simple closed curve on \( S \).
The homology class of \( \gamma \) is determined, up to orientation, by its partition of \( \E(S) \cup \partial S\); in particular,  each simple element \( v \in H_1^{sep}(S, \bz) \) determines a partition of \( \E(S) \) into two disjoint sets \( v^- \) and \( v^+ \), where \( v^- \) consists of the ends to the left of \( v \) and \( v^+ \) those to the right.
Note that if \( v \) is simple, then \( v = 0 \) if and only if one of \( v^- \) or \( v^+ \) is empty.

For a simple nonzero element \( v \in H_1(S, \bz) \), let \( \cc_v(S) \) denote the subgraph of \( \cc(S) \) induced on the set of vertices that can be oriented to represent \( v \).
A \emph{proper coloring} of a graph is a labelling of the vertices such that no two adjacent vertices share a label.

Now, let \( S \) be an infinite-genus surface. 
Fix a simple nonzero homology class \( v \in H_1^{sep}(S, \bz) \) and let \( \gamma \) be an oriented simple closed curve on \( S \) representing \( v \). 
Choose an end \( e_v \in v^- \).
We will now give a proper \( \bz \)-coloring \( \phi_\gamma \co \cc_v(S) \to \bz \).

Let \( c \in \cc_v(S) \).
As \( c \cup \gamma \) is compact, there exists a connected compact surface \( R \) such that each component of \( \partial R \ssm \partial S \) is separating and \( c \cup \gamma \subset \mathring R \), where \( \mathring R \) denotes the interior of \( R \).
Let \( \partial_0 \) be the boundary component of \( R \) such that \( e_v \) and \( \mathring R \) are on opposite sides of \( \partial_0 \). 
Let \( \mathfrak g_R(c) \) and \( \mathfrak g_R(\gamma) \) be the genera of the component of \( R\ssm c \) and \( R \ssm \gamma \) containing \( \partial_0 \).
Define
\[
\phi_\gamma(c) = \mathfrak g_R(c) - \mathfrak g_R(\gamma).
\]
It is straightforward to check that the definition of \( \phi_\gamma \) is independent of the choices of \( R \) and \( e_v \in v^- \).

\begin{figure}
\centering
\includegraphics{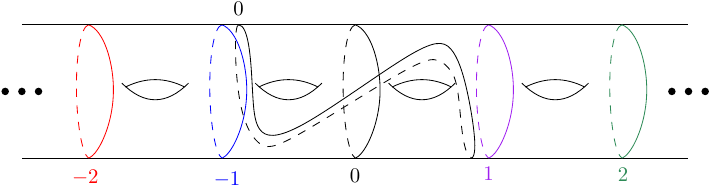}
\caption{On the two-ended infinite-genus surface with no planar ends \( L \), there is a unique nontrivial simple separating homology class \( v \).  Shown here is a---portion of a---\( \bz \)-coloring of \( \mathcal C_v(L) \).  Two of the curves shown are (necessarily) colored by 0. }
\label{fig:colors}
\end{figure}

\begin{Lem}
\( \phi_\gamma \) is a proper \( \bz \)-coloring of \( \cc_v(S) \).
\end{Lem}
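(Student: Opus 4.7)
The statement has two contents: that $\phi_\gamma$ is well-defined (independent of the choice of $R$), and that it is a \emph{proper} coloring, meaning adjacent vertices of $\cc_v(S)$ receive different labels. Well-definedness is immediate from the construction: if $R \subset R'$ are two valid choices, then the $\partial_0$-component of $R' \ssm c$ is obtained from the $\partial_0$-component of $R \ssm c$ by attaching the same compact subsurface lying in $R' \ssm \mathring R$; this contributes the same correction to both $\mathfrak g_{R'}(c)$ and $\mathfrak g_{R'}(\gamma)$, so their difference $\phi_\gamma(c)$ is unchanged.

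For properness, let $c_1, c_2$ be two adjacent vertices of $\cc_v(S)$, so $c_1$ and $c_2$ may be realized disjointly and each admits an orientation representing $v$. The plan is to pick a single compact connected $R \subset S$ containing $c_1 \cup c_2 \cup \gamma$ in its interior, with every component of $\partial R \ssm \partial S$ separating in $S$, and then let $A_i$ be the component of $R \ssm c_i$ containing $\partial_0$. Because $c_1$ and $c_2$ are homologous, each separates $R$; moreover, since both are disjoint separating curves that cobound in $S$, the regions $A_1$ and $A_2$ are nested, and the ``leftover'' piece $F = \overline{A_1 \triangle A_2}$ is a compact connected subsurface of $R$ with $\partial F = c_1 \cup c_2$.

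An Euler characteristic computation under gluing along circles then yields
\[
\bigl|\mathfrak g_R(c_1) - \mathfrak g_R(c_2)\bigr| = \genus(F),
\]
and subtracting $\mathfrak g_R(\gamma)$ from each side gives $|\phi_\gamma(c_1) - \phi_\gamma(c_2)| = \genus(F)$. Since $F$ has exactly two boundary circles, $\genus(F) = 0$ would force $F$ to be an annulus and therefore $c_1$ and $c_2$ to be isotopic, contradicting that they are distinct vertices of $\cc(S)$. Hence $\genus(F) \geq 1$, and so $\phi_\gamma(c_1) \neq \phi_\gamma(c_2)$.

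The main obstacle, to my mind, is not the Euler characteristic accounting (which reduces to additivity of $\chi$ under gluing along circles) but rather the topological setup: verifying that $c_1$ and $c_2$ actually separate $R$ (not just $S$), that $A_1$ and $A_2$ are genuinely nested, that $F$ is connected with boundary precisely $c_1 \cup c_2$, and that $\partial_0$ picks out a consistent side for both curves. All of this rests on the separating hypothesis imposed on $\partial R$ together with the fact that $c_1$ and $c_2$ are homologous; once these points are confirmed, the rest of the argument is formal.
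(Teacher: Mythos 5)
Your proof is correct and takes essentially the same approach as the paper: adjacent vertices of \( \cc_v(S) \) cobound a compact subsurface \( F \) whose genus equals \( |\phi_\gamma(c_1)-\phi_\gamma(c_2)| \) and must be positive since a genus-zero cobounding surface would be an annulus. The paper's own proof is a two-line version of exactly this argument (with well-definedness in the choice of \( R \) asserted separately, just before the lemma), so your additional checks on nestedness and on \( \partial F = c_1 \cup c_2 \) are simply the details the paper leaves implicit.
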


\begin{proof}
If \( b,c \in \cc_v(S) \) are adjacent, then they cobound a compact surface \( F \) of positive genus satisfying 
\[
0 < \genus(F) = |\phi_\gamma(c) - \phi_\gamma(d)|.
\]
It follows that \( \phi_\gamma \) is a proper coloring.
\end{proof}

Since a nonzero, non-peripheral, simple, separating homology class \( v \) is determined, up to orientation, by its partition of  \( \E(S) \cup \partial S \), the action of \( \pmap S \) on \( \cc(S) \) restricts to an action on \( \cc_v(S) \) for each \( v \in H_1^{sep}(S, \bz) \).

\begin{Lem}
\label{lem:equations}
If \( v \in H_1^{sep}(S, \bz) \) is simple and nonzero, \( \gamma \) and \( \delta \) are oriented simple closed curves representing \( v \), and \( f \in \pmap S \), then the following equalities hold:
\begin{equation}
\label{eq:translation}
(\phi_\gamma\circ f) (c)- \phi_\gamma(c) = \phi_\gamma(f(\gamma)) \quad \text{for all } c\in \cc_v(S).
\end{equation}
\begin{equation}
\label{eq:constant}
\phi_\gamma(c)-\phi_\delta(c)  = \phi_\gamma(\delta) = -\phi_\delta(\gamma) \quad \text{for all } c\in \cc_v(S).
\end{equation}
\begin{equation}
\label{eq:subscript}
\phi_\gamma \circ f - \phi_\gamma = \phi_\gamma - \phi_{f(\gamma)}.
\end{equation}
\end{Lem}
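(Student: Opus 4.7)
The plan is to prove the three equations in order, with the central observation that once \eqref{eq:constant} is available, the remaining two equations reduce to a single end-preservation identity for elements of \( \pmap S \).

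For \eqref{eq:constant}, I would fix \( c \in \cc_v(S) \) and choose a single connected compact subsurface \( R \subset S \) whose interior contains \( c \cup \gamma \cup \delta \) and each of whose boundary components in \( S \ssm \partial S \) is separating; this exists because \( c \cup \gamma \cup \delta \) is compact. By the observed independence of the construction from the choice of admissible \( R \), both \( \phi_\gamma(c) \) and \( \phi_\delta(c) \) can be computed using this common \( R \) and the unique boundary component \( \partial_0 \) separating \( e_v \) from \( \mathring R \). Subtracting the two expressions collapses the \( \mathfrak g_R(c) \) term and leaves \( \phi_\gamma(c) - \phi_\delta(c) = \mathfrak g_R(\delta) - \mathfrak g_R(\gamma) = \phi_\gamma(\delta) \), which is independent of \( c \); swapping the roles of \( \gamma \) and \( \delta \) yields \( \phi_\delta(\gamma) = -\phi_\gamma(\delta) \).

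For \eqref{eq:translation}, the key auxiliary step is to show that
\[
\phi_{f(\gamma)}(f(c)) = \phi_\gamma(c) \qquad \text{for all } f \in \pmap S.
\]
Starting from an \( R \) computing \( \phi_\gamma(c) \) with distinguished boundary \( \partial_0 \), set \( R' = f(R) \) and \( \partial_0' = f(\partial_0) \). Because \( f \in \pmap S \) fixes \( e_v \) and \( \partial_0 \) separates \( e_v \) from \( \mathring R \), the image \( \partial_0' \) separates \( e_v \) from \( \mathring{R'} \), so \( R' \) is admissible for computing \( \phi_{f(\gamma)}(f(c)) \). Moreover, \( f \) restricts to a homeomorphism between the ``\( e_v \)-side'' components of \( R \ssm c \) and \( R' \ssm f(c) \) (and analogously for \( \gamma \) and \( f(\gamma) \)), so the relevant genera coincide and the identity follows. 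Applying \eqref{eq:constant} with \( \delta = f(\gamma) \) to the curve \( f(c) \) then yields \( \phi_\gamma(f(c)) - \phi_{f(\gamma)}(f(c)) = \phi_\gamma(f(\gamma)) \); substituting the auxiliary identity produces \eqref{eq:translation}. Equation \eqref{eq:subscript} is immediate: both sides equal the constant \( \phi_\gamma(f(\gamma)) \), the left-hand side by \eqref{eq:translation} and the right-hand side by \eqref{eq:constant} with \( \delta = f(\gamma) \).

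The main obstacle, and the only place the pure hypothesis genuinely enters, is the end-preservation step inside the auxiliary identity: one must verify that the component of \( R' \ssm f(c) \) on the \( e_v \)-side is precisely the \( f \)-image of the component of \( R \ssm c \) on the \( e_v \)-side, so that the genus counts truly coincide. Once this is checked, the three equations drop out formally from the definitions.
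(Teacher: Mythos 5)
Your proposal is correct and follows essentially the same route as the paper: the paper proves \eqref{eq:translation} by a one-line telescoping computation whose middle step, \( \mathfrak g_R(f(c))-\mathfrak g_R(f(\gamma)) = \mathfrak g_R(c)-\mathfrak g_R(\gamma) \), is exactly the auxiliary identity \( \phi_{f(\gamma)}(f(c)) = \phi_\gamma(c) \) that you isolate and justify via purity of \( f \), and it then obtains \eqref{eq:constant} by the same kind of computation and \eqref{eq:subscript} by combining the two. You merely reorder the logic (proving \eqref{eq:constant} first) and make explicit the end-preservation step the paper leaves implicit, which is a fair and complete reading.
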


\begin{proof}
Let \( c \in \cc_v(S) \).
Choose a compact surface \( R \) containing \( c, f(c), \gamma, f(\gamma) \) and such that each component of \( \partial R \ssm \partial S \) is separating. 
To see \eqref{eq:translation}, a direct computation yields:
\begin{align*}
\phi_\gamma(f(c)) 	&= \mathfrak g_R(f(c)) - \mathfrak g_R(\gamma) \\
				&= \mathfrak g_R(f(c))-\mathfrak g_R(f(\gamma))+ \mathfrak g_R(f(\gamma)) - \mathfrak g_R(\gamma)\\
				&= \phi_\gamma(c) + \mathfrak g_R(f(\gamma)) - \mathfrak g_R(\gamma)\\
				&= \phi_\gamma(c) + \phi_\gamma(f(\gamma))
\end{align*}

\eqref{eq:constant} follows from a near identical argument.
Combining \eqref{eq:translation} and \eqref{eq:constant} with \( \delta = f(\gamma) \) yields \eqref{eq:subscript}.
\end{proof}

Given a handle shift \( h \) and a simple element \( v \in H_1^{sep}(S, \bz) \), we say \( v \) \emph{cuts} \( h \) if \( h^+ \) and \( h^- \) are on opposite sides of \( v \). 
In light of Lemma \ref{lem:equations}, we are now in a position to define a homomorphism \( \vp_v \co \pmap S \to \bz \) for \( v \in H_1^{sep}(S, \bz) \) simple.

\begin{Prop}[Definition of \( \vp_v \)]
\label{prop:homomorphism}
Let \( S \) be an infinite-genus surface.
Let  \( v \in H_1^{sep}(S, \bz) \) be simple and nonzero.
If  \( \gamma \) is any oriented simple closed curve representing \( v \), then \[ \vp_v \co \pmap S \to \bz \] given by 
\[
\vp_v(f) = \phi_\gamma(f(\gamma)),
\]
is a well-defined homomorphism. 
In addition, \( \vp_v \) satisfies:
\begin{enumerate}

\item
\( \vp_v\left( \clpmap S\right) = 0 \).

\item \( \vp_{-v} = - \vp_v \).

\item 
If \( h \in \pmap S \) is a handle shift, then \( \vp_v(h) \neq 0 \) if and only if \( v \) cuts \( h \).

\item
\( \vp_v \neq 0 \). 

\end{enumerate}
\end{Prop}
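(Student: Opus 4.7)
The plan is to extract well-definedness and the homomorphism property mechanically from Lemma \ref{lem:equations}, then dispatch (1)--(4) in turn, using direct continuity of \( \vp_v \) together with the topological generation result of \cite{PatelAlgebraic} to handle (1) and (4).

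For well-definedness and the homomorphism property, note that by \eqref{eq:translation} the function \( \phi_\gamma \circ f - \phi_\gamma \) is the constant integer \( \phi_\gamma(f(\gamma)) \), so \( \vp_v(f) \) is a well-defined element of \( \bz \). Independence of the choice of oriented representative \( \gamma \) follows from \eqref{eq:constant}: for any other oriented \( \delta \) representing \( v \), the difference \( \phi_\gamma - \phi_\delta \) is constant and cancels in the difference. For the homomorphism identity, for any \( c \in \cc_v(S) \) split \( \phi_\gamma(fg(c)) - \phi_\gamma(c) = [\phi_\gamma(fg(c)) - \phi_\gamma(g(c))] + [\phi_\gamma(g(c)) - \phi_\gamma(c)] \), which equals \( \vp_v(f) + \vp_v(g) \) by constancy of both brackets.

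For (1), I would first observe that \( \vp_v \) is locally constant: the orbit map \( f \mapsto f(\gamma) \) from \( \pmap S \) into the discrete set \( \cc_v(S) \) is locally constant in the permutation topology, and \( \vp_v(f) = \phi_\gamma(f(\gamma)) \) is its composition with a function on \( \cc_v(S) \). Since \( \cpmap S \) is a direct limit of finite-type pure mapping class groups of genus at least three, each perfect by Powell, \( \cpmap S \) itself is perfect and \( \vp_v \) vanishes on it; continuity then forces the vanishing on \( \clpmap S \). For (2), the oriented reverse \( \bar\gamma \) swaps the two sides of \( \gamma \) and of any \( c \in \cc_v(S) \) in any compact surface \( R \supset \gamma \cup c \); since the two side-genera sum to \( \genus(R) \), this gives \( \phi_{\bar\gamma}(c) = -\phi_\gamma(c) \), hence \( \vp_{-v} = -\vp_v \).

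For (3), if \( v \) cuts \( h \), isotope \( \gamma \) so that \( \gamma \cap \iota(\Sigma) \) is a single transverse arc; this is possible because \( \gamma \) separates \( h^+ \) from \( h^- \) and hence has algebraic intersection one with the strip. Then \( h(\gamma) \) agrees with \( \gamma \) outside \( \iota(\Sigma) \) and is the arc shifted by one period inside, so \( \gamma \cup h(\gamma) \) cobounds a subsurface of \( \iota(\Sigma) \) consisting of a single handle; taking \( R \) sufficiently large yields \( |\vp_v(h)| = 1 \). If instead \( h^\pm \) lie on the same side of \( v \), a standard minimal-position argument produces an isotopic representative of \( v \) disjoint from \( \iota(\Sigma) \) (both boundary components of the strip have algebraic intersection zero with \( \gamma \)), and on such a representative \( h \) acts as the identity, so \( \vp_v(h) = 0 \). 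For (4), if \( \E_\infty(S) \) lies entirely on one side of \( v \), then by (3) \( \vp_v \) vanishes on every handle shift; combined with the vanishing on \( \cpmap S \) from (1), and since these sets topologically generate \( \pmap S \) by \cite[Theorem 4]{PatelAlgebraic}, continuity forces \( \vp_v \equiv 0 \). Conversely, if \( \E_\infty(S) \) meets both \( v^+ \) and \( v^- \), pick \( e^\pm \in \E_\infty(S) \cap v^\pm \) and build a handle shift \( h \) with \( h^\pm = e^\pm \); then \( v \) cuts \( h \) and \( \vp_v(h) \neq 0 \) by (3). The main technical point is the isotopy argument in (3) producing a representative of \( v \) disjoint from \( \iota(\Sigma) \) when both ends lie on one side; everything else reduces to Lemma \ref{lem:equations}, perfectness of \( \cpmap S \), and topological generation.
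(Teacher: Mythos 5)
Your proof is correct, and its core --- well-definedness via \eqref{eq:constant}, the homomorphism identity by telescoping the constant differences from \eqref{eq:translation}, part (2) by complementing side-genera in \( R \), and both directions of part (3) --- follows the same route as the paper. Where you genuinely diverge is in parts (1) and (4). For (1) the paper argues directly: given \( \bar f \in \clpmap S \) and \( c \in \cc_v(S) \), it picks a compactly supported \( f \) with \( f(c) = \bar f(c) \) (possible by definition of the closure) and observes that such an \( f \) cannot change \( \mathfrak g_R(c) \) once \( R \) contains its support; no perfectness is needed. Your route --- perfectness of \( \cpmap S \) via Powell together with local constancy of \( \vp_v \) --- is also valid (the paper deploys exactly this perfectness-plus-continuity argument later, in Theorem \ref{thm:isomorphism}), but it imports Powell's theorem and the identification of the compact-open with the permutation topology where the paper's computation is elementary and self-contained. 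For the forward direction of (4) the paper again counts genus directly (the finite-genus side of \( c \) is a homeomorphism invariant), whereas you reduce it to (1), (3), continuity, and topological generation of \( \pmap S \) by \( \cpmap S \) and handle shifts from \cite[Theorem 4]{PatelAlgebraic}; this is a clean and correct reduction, at the cost of an external input the paper does not need at this point. Your explicit observation that \( \vp_v \) is locally constant, hence continuous, is a useful remark the paper leaves implicit until Section 4. One small imprecision: in (3) you say you ``isotope'' \( \gamma \) so that it meets \( \iota(\Sigma) \) in a single arc; in general one must choose a possibly different simple closed curve representing the class \( v \) (as the paper does), which is licensed by the well-definedness you have already established, so this does not affect correctness.
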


\begin{proof}
Fix \( v \in H_1^{sep}(S, \bz) \). 
We first need to show that \( \phi_v \) is well-defined.
Let \( \gamma \) and \( \delta \) be oriented simple closed curves homologous to \( v \).
Using \eqref{eq:constant}, we have
\[
\phi_\gamma(f(\gamma)) 	= \phi_\delta(f(\gamma))-\phi_\delta(\gamma) = \phi_\delta(f(\delta))
\]

Hence, \( \vp_v \) is well defined.

If \( f, g \in \pmap S \), then
\begin{align*}
\vp_v(fg)	&= \phi_\gamma(fg(\gamma)) \text{  (by definition)} \\
		&= -\phi_{fg(\gamma)}(\gamma) \text{  (by \eqref{eq:constant})} \\
		&=  - \phi_{g(\gamma)}(\gamma) + \phi_{g(\gamma)}(\gamma) - \phi_{fg(\gamma)}(\gamma) \\
		&= - \phi_{g(\gamma)}(\gamma) + \phi_{g(\gamma)}\circ f(\gamma) - \phi_{g(\gamma)}(\gamma) \text{  (by \eqref{eq:subscript})} \\
		&= - \phi_{g(\gamma)}(\gamma) + \phi_{g(\gamma)}(f(g(\gamma))) \text{  (by \eqref{eq:translation}} \\
		&= \vp_v(g) + \vp_v(f) \text{  (by definition and well-definedness)}.
\end{align*}
This establishes \( \vp_v \co \pmap S \to \bz \) as a homomorphism.

To see (1), let \( \bar f \in \clpmap S \), let \( c \in \cc_v(S) \), and let \( f \in \cpmap S \) such that \( f(c) = \bar f(c) \).
Choose a compact surface \( R \) containing \( c \) and the support of \( f \) such that each component of \( \partial R \ssm \partial S \) is separating.
It must be that \( \mathfrak g_R(f(c)) = \mathfrak g_R(c) \).
In particular, if we orient \( c \) to represent \( v \), then
\[ 
0 = \phi_c(f(c)) = \phi_c(\bar f(c)) = \vp_v(\bar f). 
\]

(2) can be seen from the definitions but is also a direct consequence of \eqref{eq:constant} and  \eqref{eq:subscript}.

To see (3), first let \( h \in \pmap S \) be a handle shift cut by \( v \).
Let \( h \) be supported in the subsurface \( \Sigma \subset S \). 
We can then choose an oriented representative \( \gamma \) of \( v \) so that \(| \gamma \cap \partial \Sigma | = 2 \). 
In this case \( h(\gamma) \) is disjoint from \( \gamma \) and hence they cobound a positive genus subsurface yielding \( \vp_v(h) \neq 0 \). 
Conversely, if \( v \) does not cut a handle shift \( h \), then there is a representative \( \gamma \) of \( v \) disjoint from the support of \( h \).
It follows that \( \vp_v(h) = 0 \).

Now to (4):
If \( v \in H_1^{sep}(S, \bz) \) is nonzero, it partitions \( \E_\infty(S) \) nontrivially, and there exists a handle shift \( h \in \pmap S \) cut by \( v \); in particular, by (3), \( \vp_v(h) \neq 0 \).
\end{proof}

\textbf{Remark.}
If \( S \) has two ends accumulated by genus and \( v \in H_1^{sep}(S, \bz) \) is simple and nonzero, then the function \( d \co \cc_v(S) \to \br \) defined by \( d(a,b) = |\vp_v(a)-\vp_v(b)| \) is a metric.
Moreover, \( \cc_v(S) \) with this metric agrees with the metric on the graph \( \mathcal G (S) \) defined in \cite[Section 9]{DurhamGraphs}.
In \cite{DurhamGraphs}, this metric space and the corresponding action of \( \map S \) is used to show that \( \map S \) admits a left-invariant infinite-diameter pseudo-metric.

\section{Proofs of Theorems \ref{thm:cohomology} and \ref{thm:decomposition}}

In Section \ref{coloring}, we constructed an element of \( H^1(\pmap S, \bz) \) for each nonzero simple element of \( H_1^{sep}(S, \bz) \).
The remainder of the article is dedicated to showing that these homomorphisms span all of \( H^1(\pmap S, \bz) \). 
To this end, the goal of the next sequence of lemmas is to extend the definition of \( \vp_v \) to an arbitrary element \( v \in H_1^{sep}(S, \bz) \) by linearity.

\begin{Lem}
\label{lem:additive}
Let \( v_1, \ldots, v_n \) be simple homology classes in \( H_1^{sep}(S, \bz) \) such that there exist pairwise-disjoint oriented simple closed curves \( \gamma_1, \ldots, \gamma_n \) so that \( \gamma_i \) represents \( v_i \).
If \( v = \sum v_i \) is simple, then
\[
\vp_v = \sum_{i=1}^n \vp_{v_i}.
\] 
\end{Lem}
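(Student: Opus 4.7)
My plan is to establish the identity \( \vp_v(f) = \sum_{i=1}^n \vp_{v_i}(f) \) for every \( f \in \pmap S \) by reducing it to a topologically generating set and extending by continuity. Both \( \vp_v \) and each \( \vp_{v_i} \) are homomorphisms \( \pmap S \to \bz \) that vanish on \( \cpmap S \) by Proposition \ref{prop:homomorphism}(1), and by \cite[Theorem 4]{PatelAlgebraic} the set \( \cpmap S \) together with all handle shifts topologically generates \( \pmap S \). Each \( \vp_v \) is continuous as a map to discrete \( \bz \), because the formula \( \vp_v(f) = \phi_\gamma(f(\gamma)) \) factors \( \vp_v \) through the locally constant map \( f \mapsto [f(\gamma)] \); local constancy follows from compactness of \( \gamma \), since mapping classes uniformly close on \( \gamma \) in the compact-open topology send \( \gamma \) to isotopic curves. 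It therefore suffices to verify the identity on every handle shift.

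For a handle shift \( h \) with repelling and attracting ends \( h^\pm \in \Endsinfty{S} \), I would refine Proposition \ref{prop:homomorphism}(3) by a direct genus computation: choose a representative of \( v \) meeting the support of \( h \) transversely in two arcs when \( v \) cuts \( h \) (and disjoint from the support otherwise), and observe that a handle shift transfers exactly one handle across \( \gamma \). This yields the explicit formula
\[
\vp_v(h) = \tfrac{1}{2}\bigl[\sigma_v(h^+) - \sigma_v(h^-)\bigr],
\]
where \( \sigma_v \co \Ends{S} \cup \partial S \to \{+1,-1\} \) sends ends and boundary components lying in \( v^\pm \) to \( \pm 1 \). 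The same formula holds for each \( v_i \). Crucially, the right-hand side depends only on the \emph{difference} \( \sigma_v(h^+) - \sigma_v(h^-) \), so the identity on handle shifts will follow once \( \sigma_v - \sum_{i=1}^n \sigma_{v_i} \) is shown to be a constant function on \( \Ends{S} \cup \partial S \).

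The main obstacle is establishing this constancy. My plan is to use the bilinearity of algebraic intersection: fix a base end \( e_0 \) and, for every end \( e \), choose a proper arc \( \alpha_e \) joining \( e_0 \) to \( e \); then with suitable sign conventions \( \sigma_v(e) - \sigma_v(e_0) = 2\bigl([\gamma] \cdot [\alpha_e]\bigr) \), a quantity linear in \( [\gamma] \in H_1^{sep}(S, \bz) \). Applying linearity to \( v = \sum_i v_i \) (using the disjoint representatives \( \gamma_i \) to compute each intersection independently) yields
\[
\sigma_v(e) - \sigma_v(e_0) = \sum_{i=1}^{n}\bigl(\sigma_{v_i}(e) - \sigma_{v_i}(e_0)\bigr),
\]
so that \( \sigma_v - \sum_i \sigma_{v_i} \) is the constant \( \sigma_v(e_0) - \sum_i \sigma_{v_i}(e_0) \). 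This constant -- which need not vanish when \( v \) is simple (for example, when the nested configuration forces \( \sum_i \sigma_{v_i} \) to drop outside \( \{\pm 1\} \) while \( \sigma_v \) remains \( \{\pm 1\} \)-valued) -- nevertheless cancels when forming the differences \( \vp_v(h) \) and \( \sum_i \vp_{v_i}(h) \), completing the reduction and hence the lemma.
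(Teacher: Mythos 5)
Your argument is correct in outline, but it takes a genuinely different route from the paper. The paper's proof is a short direct computation valid for \emph{every} \( f \in \pmap S \) at once: since \( \gamma_1, \ldots, \gamma_n \) and \( -\gamma \) cobound a compact surface \( F \), one picks a compact \( R \) with separating boundary containing \( F \cup f(F) \) and writes \( \mathfrak g_R(\gamma) = \sum_i \mathfrak g_R(\gamma_i) - (n-1)\genus(R) - \genus(F) \), with the analogous identity for \( f(\gamma) \); the correction terms \( (n-1)\genus(R) + \genus(F) \) cancel in the difference because \( \genus(F) = \genus(f(F)) \), and the lemma falls out with no continuity, density, or generation input. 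You instead reduce to a topologically generating set (compactly supported classes plus handle shifts, via \cite[Theorem 4]{PatelAlgebraic}) using continuity of \( \vp_v \), and then verify the identity on handle shifts through the formula \( \vp_v(h) = \tfrac12[\sigma_v(h^+) - \sigma_v(h^-)] \) together with the observation that \( \sigma_v - \sum_i \sigma_{v_i} \) is constant. Each step can be made to work: continuity follows from Proposition \ref{prop:equivalence} since \( f \mapsto [f(\gamma)] \) is locally constant in the permutation topology; the refined handle-shift formula strengthens Proposition \ref{prop:homomorphism}(3) and does hold (a handle shift in standard position moves exactly one handle across \( \gamma \)); and your handling of the nonzero constant \( \sigma_v - \sum_i \sigma_{v_i} \) by passing to differences is exactly the right move. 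What your route buys is an appealing closed formula for \( \vp_v \) on handle shifts purely in terms of the end partition, which anticipates the role handle shifts play in Theorem \ref{thm:decomposition}; what it costs is several extra inputs (the topological generation theorem, a continuity argument, the well-definedness of the pairing of \( [\gamma] \) with proper arcs in locally finite homology, and consistent sign conventions), each of which needs a sentence or two of justification that the paper's two-line genus count simply avoids. You should also note explicitly that the case where \( S \) has at most one end accumulated by genus is covered because there the closure of \( \cpmap S \) is already all of \( \pmap S \) and both sides vanish.
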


\begin{proof}

\begin{figure}[t]
\centering
\includegraphics{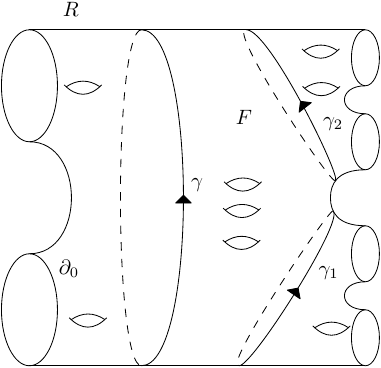}
\caption{\( R, \Sigma, \gamma, \gamma_1, \gamma_2, \partial_0 \) as in the proof of Lemma \ref{lem:additive}.}
\label{fig:additive}
\end{figure}

Let \( \gamma \) be an oriented simple closed curve representing \( v \).
By the hypothesis of the lemma, \( \gamma \) can be chosen to be disjoint from \( \gamma_i \) for all \( i \in \{1, \ldots, n \} \); furthermore, \( \gamma_1, \ldots, \gamma_n, \) and \( -\gamma \) cobound a compact surface \( F \).
By part (2) of Proposition \ref{prop:homomorphism}, we can assume without loss of generality that \( F \) is to the right of \( \gamma \) as in Figure \ref{fig:additive}. 

Now fix an element \( f \in \pmap S \) and choose a compact surface \( R \) such that each component of \( \partial R \) is separating and \( F \cup f(F) \subset \mathring R \).
By our choice of orientation, we can choose \( e_v \in v^{-} \) so that there exists a component \( \partial_0 \) of \( \partial R \) such that \( \partial_0 \) separates \( e_{v} \) from \( \gamma \) and \( \gamma_i \) for each \( i \in \{1, \ldots, n\} \). 

Observe that 
\[
\mathfrak g_R(\gamma) = \left(\sum_{i=1}^n \mathfrak g_R(\gamma_i) \right) - (n-1)\genus(R) - \genus(F)
\]
and, similarly,
\[
\mathfrak g_R(f(\gamma)) = \left(\sum_{i=1}^n \mathfrak g_R(f(\gamma_i)) \right) - (n-1)\genus(R) - \genus(f(F)).
\]
Since \( \genus(F) = \genus(f(F)) \), we see that
\begin{align*}
\vp_v(f) 	&= \mathfrak g_R(f(\gamma)) - \mathfrak g_R(\gamma) \\
		&= \left(\sum_{i=1}^n \mathfrak g_R(f(\gamma_i)) \right) - \left(\sum_{i=1}^n \mathfrak g_R(\gamma_i) \right) \\
		& = \sum_{i=1}^n \left(\mathfrak g_R(f(\gamma_i))-\mathfrak g_R(\gamma_i) \right) \\
		&= \sum_{i=1}^n \vp_{v_i}(f).
\end{align*}
As \( f \in \pmap S \) was arbitrary, the result follows.
\end{proof}

A \emph{principal exhaustion} of \( S \) is a collection of finite-type surfaces \( \{K_i\}_{i\in \bn} \) satisfying:
\begin{enumerate}
\item \( K_i \subset K_j \) whenever \( i < j \),
\item each component of \( \partial K_i \) is separating, and 
\item each component of \( S \ssm K_i \) is of infinite type.
\end{enumerate}

We record a lemma regarding the structure of \( H_1^{sep}(S, \bz) \):

\begin{Lem}
\label{lem:homology1}
If \( \{K_i\}_{i\in\bn} \) is a principal exhaustion of \( S \), then
\[
H_1^{sep}(S, \bz) = \varinjlim H_1^{sep}(K_n, \bz).
\]
In particular, there exists \( n,m \in \bn \) such that every nonzero element \( v \in H_1^{sep}(S, \bz) \) can be written as
\[
v = \sum_{k=1}^m a_k v_k,
\]
where \( a_k \in \bz \) and \( v_k \) can be represented by a peripheral curve on \(  K_n \). 
\end{Lem}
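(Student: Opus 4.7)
The plan is to combine the standard fact that singular homology commutes with directed colimits with a direct analysis of \( H_1^{sep} \) in the compact case. First, I would check that each inclusion \( K_n \into S \) induces a well-defined map \( H_1^{sep}(K_n, \bz) \to H_1^{sep}(S, \bz) \). Suppose \( \gamma \) is a simple separating curve in \( K_n \), splitting \( K_n \) into \( A \sqcup B \); each boundary component of \( K_n \) is disjoint from \( \gamma \) and so lies entirely in either \( A \) or \( B \). Using the principal exhaustion hypotheses --- each component of \( \partial K_n \) is separating in \( S \), and each component of \( S \ssm K_n \) is of infinite type --- one verifies that every component of \( S \ssm K_n \) is adjacent to exactly one component of \( \partial K_n \), and hence extends either the \( A \)-side or the \( B \)-side. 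Thus \( \gamma \) separates \( S \) as well.

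Next, I would establish the colimit identification. For surjectivity, any simple separating curve \( \gamma \subset S \) is compact and so lies in some \( K_n \); a collar of \( \gamma \) in \( K_n \) meets both components of \( S \ssm \gamma \), so \( K_n \ssm \gamma = (K_n \cap U) \sqcup (K_n \cap V) \) with both pieces nonempty, and \( \gamma \) is separating in \( K_n \) as well. Since \( H_1^{sep}(S, \bz) \) is generated by such classes, surjectivity follows. For injectivity, if a class \( v \in H_1^{sep}(K_n, \bz) \) dies in \( H_1(S, \bz) \), then any representing \( 1 \)-cycle bounds a singular \( 2 \)-chain of compact support, which is contained in some \( K_m \) with \( m \geq n \); thus \( v \) already dies in \( H_1(K_m, \bz) \).

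For the ``in particular'' statement, fix a nonzero \( v \in H_1^{sep}(S, \bz) \). By the colimit description, \( v \) is represented by some \( v' \in H_1^{sep}(K_n, \bz) \) for some \( n \). Since \( K_n \) is a compact surface with boundary, every simple separating curve on \( K_n \) splits \( K_n \) into two subsurfaces \( A \) and \( B \), and its homology class equals (up to sign) the sum of the boundary components of \( K_n \) lying in \( \overline{A} \), appropriately oriented. Therefore \( H_1^{sep}(K_n, \bz) \) is generated by the classes of the boundary components, and we may write \( v = \sum_{k=1}^m a_k v_k \) with each \( v_k \) represented by a peripheral curve on \( K_n \). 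The main subtlety in the argument is the interplay between ``separating in \( K_n \)'' and ``separating in \( S \)'' --- this is precisely where both defining features of a principal exhaustion enter. The remainder reduces to the standard compact-support argument in singular homology together with the classification of separating simple closed curves on compact surfaces.
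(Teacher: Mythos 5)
Your proposal is correct and follows the same route the paper intends: identify \( H_1^{sep}(S,\bz) \) with the direct limit via the compact-support argument, and then use the fact that for a compact surface every separating simple closed curve is homologous to a signed sum of the boundary components it cuts off. The paper's own proof consists only of the two sentences ``follows directly from \( S = \varinjlim K_i \)'' and ``basic facts about the homology of finite-type surfaces,'' so your write-up simply supplies the details the authors omit --- in particular the (correct) verification that the two defining properties of a principal exhaustion make ``separating in \( K_n \)'' and ``separating in \( S \)'' compatible.
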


\begin{proof}
The first statement follows directly from the fact that \( S = \varinjlim K_i \).
The second follows from the first and basic facts about the homology of finite-type surfaces. 
\end{proof}

Now fix a principal exhaustion \( \{K_i\}_{i\in\bn} \) of \( S \) and let \( v \in H_1^{sep}(S, \bz) \) be arbitrary.
Write \( v = \sum a_k v_k \) as in Lemma \ref{lem:homology1} and define \( \vp_v \co \pmap S \to \bz \) by 

\begin{equation}
\label{eq:linear}
\vp_v = \sum_{k=1}^m a_k \vp_{v_k}.
\end{equation}

\begin{Lem}
\label{lem:well-defined}
\( \vp_v \) is well defined.
\end{Lem}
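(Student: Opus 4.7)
The plan is to reduce well-definedness to a single ``global boundary'' relation among the peripheral classes of an exhaustion piece \( K_N \), and then dispatch that relation using Lemma \ref{lem:additive} and Proposition \ref{prop:homomorphism}(2).

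First, I would show that the value of \( \sum a_k \vp_{v_k} \) is unchanged when we pass from \( K_n \) to a larger exhaustion piece \( K_M \).  For a peripheral class \( v_k = [\gamma_k] \) of \( K_n \), the component of \( K_M \ssm \mathring K_n \) lying on the outside-of-\( K_n \) side of \( \gamma_k \) is a compact subsurface whose boundary consists of \( \gamma_k \) together with some boundary components \( \delta_1, \ldots, \delta_s \) of \( K_M \); with appropriate orientations this yields \( [\gamma_k] = \sum_j [\delta_j] \).  Since the \( \delta_j \) are pairwise disjoint simple closed curves, Lemma \ref{lem:additive} gives \( \vp_{[\gamma_k]} = \sum_j \vp_{[\delta_j]} \).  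Applying this to each term of both decompositions allows me to assume that \( \sum a_k v_k \) and \( \sum a_k' v_k' \) are both written in terms of peripheral classes of a common \( K_N \).

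With that reduction in hand, I would consider the coefficient differences \( c_i = a_i - a_i' \) (indexed over the boundary components \( \gamma_1, \ldots, \gamma_r \) of \( K_N \)), which satisfy \( \sum_i c_i [\gamma_i] = 0 \) in \( H_1^{sep}(S, \bz) \).  Using Lemma \ref{lem:homology1} and enlarging \( N \) if necessary, I may assume this relation already holds inside \( H_1^{sep}(K_N, \bz) \).  A standard long-exact-sequence computation shows that the peripheral subgroup of \( H_1(K_N, \bz) \) is free abelian of rank \( r-1 \), with single defining relation \( \sum_i [\gamma_i] = 0 \) when each \( \gamma_i \) carries its orientation as a component of \( \partial K_N \).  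Hence \( (c_1, \ldots, c_r) \) must be an integer multiple of \( (1, \ldots, 1) \), reducing the problem to the single identity \( \sum_{i=1}^r \vp_{[\gamma_i]} = 0 \).  This last identity follows from Lemma \ref{lem:additive} applied to the disjoint curves \( \gamma_2, \ldots, \gamma_r \) (which, together with \( \gamma_1 \) reversed, bound the compact surface \( K_N \)), combined with Proposition \ref{prop:homomorphism}(2) to convert the resulting \( \vp_{-[\gamma_1]} \) into \( -\vp_{[\gamma_1]} \).

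The main obstacle is the direct-limit step: one must ensure that an equality witnessed in \( H_1^{sep}(S, \bz) \) is actually realized inside some finite-stage \( H_1^{sep}(K_N, \bz) \).  A related subtlety is that a peripheral class of \( K_n \) can be zero in \( H_1^{sep}(S, \bz) \) without being zero in \( H_1^{sep}(K_n, \bz) \) (as happens for the separating boundary curves of a finite-genus exhaustion piece of the Loch Ness monster), but such terms are harmless: in that case Proposition \ref{prop:homomorphism}(4) forces \( \vp_{v_k} \equiv 0 \), so they may simply be deleted from any decomposition.
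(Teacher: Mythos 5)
Your argument is correct and follows essentially the same route as the paper's: both reduce to a common exhaustion piece by writing each peripheral class \( [\gamma_k] \) of \( K_n \) as the sum of the peripheral classes of the larger piece \( K_M \) with which it cobounds a compact subsurface, and then invoking Lemma \ref{lem:additive} (this is exactly the paper's equation \eqref{eq:basis}). Your second paragraph --- resolving the residual ambiguity of coefficients at a fixed level via the single relation \( \sum_i [\gamma_i] = 0 \) among the peripheral classes of \( K_N \) and the identity \( \sum_i \vp_{[\gamma_i]} = 0 \) obtained from Lemma \ref{lem:additive} together with Proposition \ref{prop:homomorphism}(2) --- addresses a point the paper's proof leaves implicit (it only compares a given decomposition with its induced decomposition one level up), so this is a careful completion of the same argument rather than a genuinely different one.
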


\begin{proof}
There are two stages to check: (1) \( \vp_v \) is well-defined with respect to a fixed principal exhaustion and (2) \( \vp_v \) does not depend on the choice of principal exhaustion.

\begin{figure}
\centering
\includegraphics{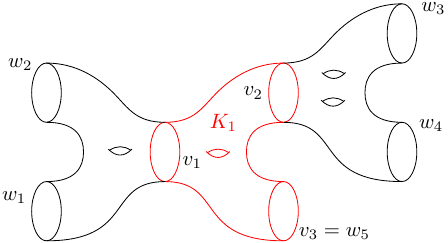}
\caption{An example of \( K_1 \) (red) sitting in \( K_2 \) and the corresponding basis elements.}
\label{fig:bases2}
\end{figure}

(1) Let \( \{K_i\}_{i\in\bn} \) be a principal exhaustion of \( S \) and fix \( n, m \in \bn \) such that \( m > n \). 
Let \( k_n \) and \( k_m \) be the number of boundary components of \( K_n \) and \( K_m \), respectively.
Choose \( v_1, \ldots, v_{k_n-1} \) to be a maximal linearly independent set of simple peripheral elements in \( H_1(K_n, \bz) \) and let \( v_{k_n} = - \sum_{i=1}^{k_n-1} v_i \).
There then exist integers \( n_i \in \{0, \ldots, k_m \} \) for \( i \in \{0,1, \ldots, k_n\} \) with \( n_0 = 0 \) and simple peripheral elements \( w_1, \ldots, w_{k_m} \) of \( H_1(K_m, \bz) \) such that
\[
v_i = \sum_{j = n_{i-1}+1}^{n_i} w_j .
\]
(See Figure \ref{fig:bases2} where \( v_1 \) would be a sum of \( w_1 \) and \( w_2 \).)
In this setup we can apply Lemma \ref{lem:additive} to see that
\begin{equation}
\label{eq:basis}
\vp_{v_i} = \sum_{j = n_{i-1}+1}^{n_i} \vp_{w_j}.
\end{equation}

Now if \( v \in H_1^{sep}(S, \bz) \) such that 
\[
v = \sum_{i=1}^{k_n} a_i v_i  \quad \text{ and } \quad  v =  \sum_{j=1}^{k_m} b_j w_j,
\]
then, by applying \eqref{eq:basis} to each basis element, 
\[
\sum_{i=1}^{k_n} a_i \vp_{v_i} = \sum_{j=1}^{k_m} b_j \vp_{w_j}.
\]
with \( b_j = a_i \) whenever \( n_{i-1} < j \leq n_i \).
This completes (1).

Now let \( \{K_i\}_{i\in \bn} \) and \( \{K_j'\}_{j\in\bn} \) be two principal exhaustions of \( S \).  
Suppose \( v \in H_1^{sep}(S, \bz) \) such that \( v \) can be written in both \( H_1^{sep}(K_n, \bz) \) and \( H_1^{sep}(K_m', \bz) \).
Choose \( N \in \bn \) such that \( K_m', K_n \subset K_N \), then a very similar argument to that of (1) yields the result.
\end{proof}

Neither the integral homology of \( S \) nor the curve graph \( \cc(S) \) can distinguish between an isolated planar end (i.e. puncture) of \( S \) and a boundary component.
Furthermore, the subgroup of \( \pmap S \) generated by Dehn twists about boundary components is contained in \( \cpmap S \).
Therefore, for notational simplicity, we will assume that \( S \) is borderless for the remainder of the section.
Only very slight modifications to the remaining proofs are needed to deal with the case in which \( S \) has compact boundary.
For instance, in the definition of \( \hat S \) below, one would need to additionally cap each boundary component of \( S \) with a disk; this is the main required change.

For an infinite-genus surface \( S \), let \( \hat S \) be the surface obtained by ``forgetting'' the planar ends of \( S \), that is, 
\[
\hat S = \bar S \ssm \E_\infty(S).
\]
Let \( \iota \co S \hookrightarrow \hat S \) denote the inclusion and \( \iota_*\co H_1^{sep}(S, \bz) \to H_1^{sep}(\hat S, \bz) \) the induced homomorphism.

\begin{Prop}[Definition of \( \hat \Phi \)]
\label{prop:definition}
Let \( S \) be an infinite-genus surface.
The map \[ \Phi \co H_1^{sep}(S, \bz) \to H^1(\pmap S, \bz ) \] given by \( \Phi(v) = \vp_v \) is a homomorphism.
Furthermore, \( \Phi \) factors through the homomorphism \( \iota_*\co H_1^{sep}(S, \bz) \to H_1^{sep}(\hat S, \bz) \) yielding a monomorphism \[ \hat \Phi \co H_1^{sep}(\hat S, \bz) \to H^1(\pmap S, \bz). \] 
\end{Prop}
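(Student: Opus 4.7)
\emph{Plan.} The proposition has three parts: (i) $\Phi$ is a homomorphism; (ii) $\Phi$ vanishes on $\ker\iota_*$, so it descends; and (iii) the resulting map $\hat\Phi$ is injective. Part (i) is built into the definition of $\vp_v$: given $u, v \in H_1^{sep}(S,\bz)$, I apply Lemma \ref{lem:homology1} (and Lemma \ref{lem:well-defined} to match conventions) to expand both in a common peripheral basis of a single $K_n$, and additivity is then immediate from the $\bz$-linearity of the defining formula.

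For (ii), the crucial observation is that a peripheral curve $c$ of $K_n$ bounds a complementary region $U$ in $S$, and $\iota_*[c] = 0$ in $H_1(\hat S,\bz)$ precisely when $U$ has no ends accumulated by genus---equivalently, when $\Endsinfty{S}$ lies entirely on the opposite side of $c$. By Proposition \ref{prop:homomorphism}(4), this is exactly the condition $\vp_{[c]} = 0$. Splitting the peripheral classes of $K_n$ into a ``planar'' family $\{v_i\}_{i \in P}$ and a ``genus-accumulated'' family $\{v_i\}_{i \in G}$, a rank count combined with the single relation $\sum_i v_i = 0$ identifies $\ker(H_1^{sep}(K_n) \to H_1^{sep}(\hat K_n))$ with the subgroup generated by $\{v_i\}_{i \in P}$. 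Passing to the direct limit and invoking additivity then gives $\Phi(\ker\iota_*) = 0$. Because $\hat S \ssm S$ is a closed discrete subspace of $\hat S$ that any simple closed curve can be isotoped to avoid, $\iota_*$ is surjective on simple classes and hence surjective, so $\hat\Phi$ is defined on all of $H_1^{sep}(\hat S, \bz)$.

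For (iii), assume $\hat\Phi(\hat v) = 0$, lift to $v \in H_1^{sep}(S, \bz)$, and expand $v = \sum_{i \in P \cup G} a_i v_i$ as above. I may assume $|G| \geq 2$ (otherwise $v$ already lies in the span of $\{v_i\}_{i \in P}$ modulo $\sum_i v_i = 0$, hence in $\ker \iota_*$). Fix a reference index $i_0 \in G$ and, for each $i \in G \ssm \{i_0\}$, construct a handle shift $h_i$ with attracting end in $U_i$ and repelling end in $U_{i_0}$. Because the $U_j$ are pairwise disjoint, $v_j$ cuts $h_i$ only when $j \in \{i, i_0\}$; orienting $v_i$ and $v_{i_0}$ so that their negative sides contain $U_i$ and $U_{i_0}$, respectively, and choosing $h_i$ to shift a single handle, Proposition \ref{prop:homomorphism}(3) together with a direct genus count yield $\vp_{v_i}(h_i) = 1$ and $\vp_{v_{i_0}}(h_i) = -1$. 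The identity $\vp_v(h_i) = a_i - a_{i_0} = 0$ then forces $a_i = a_{i_0}$ for every $i \in G$; using $\sum_i v_i = 0$ to rewrite the genus-accumulated part of $v$ as a combination of $\{v_i\}_{i \in P}$, we conclude $v \in \ker \iota_*$, so $\hat v = 0$.

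The main obstacle is the bookkeeping in step (iii): verifying that each $h_i$ is cut by exactly the two classes $v_i$ and $v_{i_0}$ and that their contributions come with opposite signs, so that $\vp_v(h_i)$ collapses to the clean difference $a_i - a_{i_0}$. A secondary subtlety is carefully invoking the relation $\sum_i v_i = 0$ to absorb the common value on $G$ into the planar coefficients, which is what finally places $v$ in $\ker \iota_*$.
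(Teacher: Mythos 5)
Your proposal is correct, and for parts (i) and (ii) it follows the same route as the paper: linearity comes from the definition of \( \vp_v \) together with Lemma \ref{lem:well-defined}, and the factorization through \( \iota_* \) comes from Proposition \ref{prop:homomorphism}(4). Where you genuinely add something is in the injectivity step. The paper's proof consists of the single assertion that property (4) gives \( \ker \Phi = \ker \iota_* \); taken literally, property (4) only characterizes when \( \vp_v \) vanishes for a \emph{simple} class \( v \), and upgrading this to arbitrary classes \( v = \sum a_i v_i \) requires knowing that the nonzero \( \vp_{v_i} \) (those with \( i \in G \)) are linearly independent in \( H^1(\pmap S, \bz) \) modulo the relation \( \sum_i v_i = 0 \). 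You supply exactly this via the dual handle shifts \( h_i \) with \( \vp_{v_j}(h_i) = \delta_{ij} - \delta_{i_0 j} \); this is the same duality the paper only deploys later, in the proof of Theorem \ref{thm:decomposition}, so your argument is a legitimate (and arguably more self-contained) front-loading of that construction rather than a different method. Two small imprecisions, neither fatal: \( \hat S \ssm S \) is closed and totally disconnected but need not be discrete (planar ends can accumulate on planar ends), though curves can still be isotoped off it, so surjectivity of \( \iota_* \) on simple classes survives; and your identification of the kernel with \( \langle v_i : i \in P \rangle \) fails in the edge case \( |G| = 1 \), where \( \iota_* v_{i_0} = 0 \) as well because \( \Endsinfty S \) then lies entirely on one side of \( \gamma_{i_0} \) --- but you quarantine that case separately in step (iii), and in step (ii) the extra kernel element is \( -\sum_{i \in P} v_i \), on which \( \Phi \) vanishes anyway, so the conclusion is unaffected.
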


\begin{proof}
The fact that \( \Phi \) is a homomorphism follows immediately from the linearity implicit in the definition of \( \vp_v \) given in \eqref{eq:linear}.
By property (4) of Proposition \ref{prop:homomorphism}, \( \ker \Phi = \ker \iota_* \); hence, \( \hat \Phi \) is a well-defined injective homomorphism.
\end{proof}

The remainder of this section is dedicated to proving that \( \hat \Phi \) is an isomorphism.
To start, we have the following observation whose proof we only sketch as it largely follows from the proof of \cite[Theorem 4]{PatelAlgebraic}.

\begin{Thm}
\label{thm:zero}
Let \( S \) be any surface.
If \( f \in \pmap S \), then \( \vp_v(f) = 0 \) for all \( v \in H_1^{sep}(S, \bz) \) if and only if \( f \in \clpmap S \). 
\end{Thm}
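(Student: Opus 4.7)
The implication $(\Leftarrow)$ is immediate: if $f \in \clpmap S$, Property~(1) of Proposition~\ref{prop:homomorphism} gives $\vp_v(f) = 0$ for simple nonzero $v$, and by the linear extension of $\vp_v$ following Lemma~\ref{lem:well-defined}, this extends to every $v \in H_1^{sep}(S, \bz)$.

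For $(\Rightarrow)$, I would rely on the construction used in the proof of \cite[Theorem 4]{PatelAlgebraic}. If $\Endsinfty{S}$ contains at most one end, that theorem already gives $\clpmap S = \pmap S$ and there is nothing to prove. Otherwise, fix a base end $e_0 \in \Endsinfty{S}$ and, for each other $e \in \Endsinfty{S}$, choose a handle shift $h_e$ with $h_e^- = e_0$ and $h_e^+ = e$, arranged to have pairwise disjoint supports. For each such $e$, also choose a simple nonzero class $v_e \in H_1^{sep}(S, \bz)$ represented by a separating curve isolating $e$ from $\Endsinfty{S} \ssm \{e\}$. By Property~(3) of Proposition~\ref{prop:homomorphism}, our choice ensures $\vp_{v_e}(h_{e'}) \neq 0$ if and only if $e = e'$.

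The construction in the proof of \cite[Theorem 4]{PatelAlgebraic}, applied to an arbitrary $f \in \pmap S$, yields integers $\{n_e\}_{e \in \Endsinfty{S} \ssm \{e_0\}}$ and an element $g \in \clpmap S$ such that
\[
f \;=\; g \cdot \prod_e h_e^{n_e},
\]
where the product converges in the compact-open topology since the supports of the $h_e$ are pairwise disjoint (and since any principal exhaustion meets only finitely many of those supports at each finite stage). Applying $\vp_{v_e}$ to both sides and combining Property~(1) on $g$ with the orthogonality relation $\vp_{v_e}(h_{e'}) = 0$ for $e' \neq e$ yields
\[
\vp_{v_e}(f) \;=\; n_e \cdot \vp_{v_e}(h_e).
\]
Hence if $\vp_v(f) = 0$ for every $v \in H_1^{sep}(S, \bz)$, each $n_e$ vanishes, forcing $f = g \in \clpmap S$.

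The main obstacle I anticipate is the case $|\Endsinfty{S}| = \infty$: one must verify both that the product decomposition produced by the proof of \cite[Theorem 4]{PatelAlgebraic} is a genuine element of $\pmap S$, and that $\vp_{v_e}$ can be computed term-by-term on it. Both reduce to the observation that the $h_e$ have pairwise disjoint supports and that a representative of $v_e$ can be chosen to meet only the support of $h_e$; thus, at the level of each fixed $\vp_{v_e}$ the computation is effectively finite, even though the full product is not.
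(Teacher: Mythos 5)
Your overall strategy is the one the paper intends: the backward direction is Property (1) of Proposition \ref{prop:homomorphism} together with the linear extension of \( \vp_v \), and the forward direction extracts from the proof of \cite[Theorem 4]{PatelAlgebraic} a factorization of \( f \) as an element of \( \clpmap S \) times a product of handle shifts with pairwise disjoint supports, whose exponents are then read off by the homomorphisms \( \vp_v \). The paper records no more detail than this, and your observation that each \( \vp_v \) sees only finitely much of the infinite product (because a representative of \( v \) can be chosen to meet only one support) is exactly the point that makes the computation legitimate.

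There is, however, a genuine gap in how you build the detecting family when \( \Endsinfty{S} \) is infinite: you index the handle shifts \( h_e \) and the classes \( v_e \) by the ends \( e \in \Endsinfty{S} \ssm \{e_0\} \). First, \( \Endsinfty{S} \) may be uncountable (e.g.\ a Cantor set of ends accumulated by genus), and a second-countable surface cannot contain uncountably many pairwise disjoint handle-shift supports, since each support has nonempty interior. Second, and independently, if \( e \) is not an isolated point of \( \Endsinfty{S} \), then no separating curve isolates \( e \) from \( \Endsinfty{S} \ssm \{e\} \), so the class \( v_e \) you require does not exist. The correct indexing --- the one the proof of \cite[Theorem 4]{PatelAlgebraic} actually produces, and the one the paper uses later in proving Theorem \ref{thm:decomposition} --- is by a countable basis \( \{v_i\} \) of \( H_1^{sep}(\hat S, \bz) \) represented by pairwise disjoint separating curves arising from a principal exhaustion; the handle shift \( h_i \) is then supported on a strip crossing only \( \gamma_i \), so that \( \vp_{v_i}(h_j) \neq 0 \) exactly when \( i = j \), and your term-by-term evaluation goes through verbatim. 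With that replacement your argument is correct (and your version is fine as written whenever \( \Endsinfty{S} \) is finite); note also that indexing by a basis rather than by ends is what keeps the argument independent of Theorem \ref{thm:decomposition}, which is proved afterwards and itself relies on this statement.
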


\begin{proof}[Sketch of Proof]
The backwards direction is property (1) from Proposition \ref{prop:homomorphism}. 
For the other implication, suppose \( f \)  has \( \vp_v(f) = 0 \)  for all \( v \in H_1^{sep} (S,\bz) \). 
By exploiting a principal exhaustion of \( S \), the proof of \cite[Proposition 6.2]{PatelAlgebraic} provides a method to write \( f \) as a limit of Dehn twists and handle shifts. 
If this process required a handle shift (that is, if \( \rm{genus}(V ) \neq \rm{genus}(W ) \)
in the last paragraph of the proof of \cite[Proposition 6.2]{PatelAlgebraic}) then we would find \( v \in H_1^{sep} (S,\bz) \)
(namely, in \cite{PatelAlgebraic}, \( v = [a] \) ) with \( \vp_v(f) \neq 0 \). Therefore \( f \in \clpmap S \).
\end{proof}

We can now prove Theorem \ref{thm:decomposition}.

\begin{proof}[Proof of Theorem \ref{thm:decomposition}]
Let \( A_S = \pmap S / \clpmap S \) and let \( H_S = H^1_{sep}( \hat S, \bz ) \). 
Let us start with assuming \( S \) has no planar ends, i.e. \( S = \hat S \).
Let \( r \in \bn \cup \{0, \infty\} \) denote the rank of \( H_1^{sep}(S, \bz) \).
Combining Lemma \ref{lem:homology1} with the fact \( H_1^{sep}( S, \bz) \) is a free abelian group, we see there exists a collection of simple separating homology classes \( \{ v_i \}_{i =1}^r \) such that 
\begin{enumerate}
\item \( H_1^{sep}(  S, \bz ) = \bigoplus_{i=1}^r \langle v_i \rangle \) and
\item there exists a pairwise-disjoint collection of oriented simple closed curves  \( \{\gamma_i\}_{i =1}^r \) on \( S \) such that \( \gamma_i \) represents \( v_i \) for each \( i \in \bn \) and any compact set of \( S \) intersects at most a finite number of the \( \gamma_i \).
\end{enumerate}

It follows that
\[
H_S = \prod_{i=1}^r \langle \pr_i \rangle,
\]
where \( \pr_i\co H_1^{sep}( S, \bz) \to \bz \) is the projection onto the \( i^{th} \) basis element, that is, 

\[
\pr_i(v_j) = \left\{
\begin{array}{ll}
1 & j = i \\
0 & j \neq i
\end{array}
\right.
\]

We will construct a handle shift \( h_i \) that is naturally associated to the projection \( \pr_i \).
Let \( S' \) be the surface obtained from \( S \) by removing pairwise disjoint regular neighborhoods of each \( \gamma_i \).
Observe that each separating curve in \( S' \) bounds a compact surface: if not, then this would contradict \( \{v_i\}_{i=1}^r \) being a basis.
It follows that each component of \( S' \) is one-ended and has infinite-genus.
Every such surface is classified up to homeomorphism by the number of boundary components; for \( n \in \bn \cup \{\infty\} \), let \( Z_n \) be the one-ended infinite-genus surface with \( n \) boundary components.
We can obtain \( Z_n \) by removing \( n \) open discs centered along the horizontal axis in \( \br^2 \) and attaching handles periodically and vertically above each removed disk as shown in Figure \ref{fig:pieces}.

\begin{figure}
\centering
\includegraphics{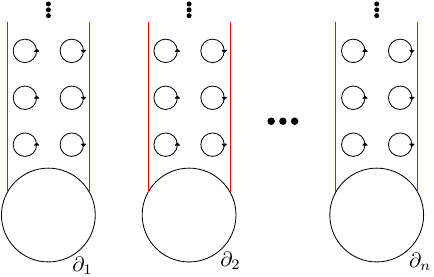}
\caption{The surface \( Z_n \).}
\label{fig:pieces}
\end{figure}

Let \( Y \) be the surface obtained from \( [0,1] \times [0, \infty) \subset \br^2 \) by attaching handles periodically with respect to the map \( Y \hookrightarrow Y \) given by \( (x,y) \mapsto (x, y+1) \).
In our construction of \( Z_n \), there are \( n \) disjoint embeddings of \( Y \) into \( Z_n \) shown in Figure \ref{fig:pieces}.
In each embedding, there is a unique boundary component of \( Z_n \) containing the image of \( [0,1] \times \{0\} \subset Y \).

Fix \( i \in \bn \) and let \( X_1 \) and \( X_2 \) be the two components of \( S' \) containing boundary components \( b_1 \) and \( b_2 \), respectively, homotopic to \( \gamma_i \) in \( S \).
Let \( Y_i \) be the image of \( Y \) in \( X_i \) intersecting \( b_i \).  
As \( b_1 \) and \( b_2 \) bound an annulus in \( S \), we can connect the intervals \( Y_1 \cap b_1 \) and \( Y_2 \cap b_2 \) with a strip \( T \cong [0,1]\times[0,1] \) in the annulus.
Now the connected subsurface \( \Sigma_i = Y_1 \cup T \cup Y_2 \) gives a proper embedding of \( \Sigma \) (see Section \ref{handleshifts}) into \( S \).
Let \( h_i \) be a handle shift supported in \( \Sigma_i \) such that \( \vp_{v_i}(h_i) = 1 \).
Observe that \( \gamma_j \) is disjoint from \( \Sigma_i \) whenever \( i \neq j \); hence, 
\begin{enumerate}[(i)]
\item
\( h_ih_j = h_jh_i \) and
\item
 \( \vp_{v_j}(h_i) = 0 \) whenever \( i \neq j \) (by property (3) of Proposition \ref{prop:homomorphism}).
\end{enumerate}
It follows from (i) that \( \prod_{i=1}^r \langle h_i \rangle < \pmap S \).
We can therefore define the monomorphism \[ \kappa \co H_S \to \pmap S \] by setting \( \kappa(\pr_i) = h_i \). 

Now if \( S \neq \hat S \), then we can repeat the above with \( \hat S \) and by choosing commuting preimages of the \( h_i \) in \( \pmap S \) under the forgetful map \( \pmap S \to \pmap{\hat S} \).
This is always possible as one can choose the \( \Sigma_i \) above to avoid the planar ends of \( S \).

By (1) of Proposition \ref{prop:homomorphism} and Proposition \ref{prop:definition}, every homomorphism \( \hat \Phi(v) \) for \( v \in H_1^{sep}( \hat S, \bz) \) factors through \( A_S \) yielding a homomorphism \( \bar \Phi(v) \co A_S \to \bz \).
Let \( h \in \kappa(H_S) \), so we can write \( h = \prod h_i^{m_i} \).
If \( h \) is not the identity, then there exists \( i \in \bn \) such that \( m_i \neq 0 \).
It follows that \( \hat \Phi(v_i)(h) = m_i \) and thus \( \bar \Phi(v_i)(\pi(h)) = m_i \).
This implies that \( \pi(h) \neq 0 \) and as \( h \) was arbitrary we can conclude that \( \pi \circ \kappa \) is injective.

Given \( a \in A_s \), define \( a^* \in H_S \) by \( a^*(v) = \bar \Phi(v) (a) \).
It is easy to check that the map \( \Psi \co A_S \to H_S \) given by \( \Psi(a) = a^* \) is a homomorphism.
Furthermore, if \( a^* = 0 \), then \( \bar \Phi(v)(a) = 0 \) for all \( v \in H_1^{sep}(\hat S, \bz) \); it follows from Theorem \ref{thm:zero}  that \( a = 0 \) and \( \Psi \) is injective.
Now by the definition of \( h_i \) and property (2) above, we see that \( \pi(h_i)^* = \pr_i \).
The injectivity of \( \pi \circ \kappa \) now implies that the composition \( \pi \circ \kappa \circ \Psi \) is the identity.
We can therefore conclude that \( \pi \circ \kappa \) is an isomorphism  whose inverse is \( \Psi \).
\end{proof}

We can now show that \( \hat \Phi \) is an isomorphism.

\begin{Thm}
\label{thm:isomorphism}
For any surface \( S \) of genus at least two, the map \( \hat \Phi \co H_1^{sep}(\hat S, \bz) \to H^1(\pmap S, \bz) \) is an isomorphism.
\end{Thm}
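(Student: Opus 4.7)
Injectivity of $\hat\Phi$ is Proposition \ref{prop:definition}; the task is surjectivity. Given $\psi \in \Hom(\pmap S, \bz)$, I plan to construct $v \in H_1^{sep}(\hat S, \bz)$ with $\hat\Phi(v) = \psi$ by first pushing $\psi$ through $A_S = \pmap S/\clpmap S$ and then reading off $v$ from the explicit presentation of $A_S$ supplied by Theorem \ref{thm:decomposition}.

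The first step is to show that $\psi$ kills $\clpmap S$. Since $\pmap S$ is Polish by Corollary \ref{cor:polish}, Dudley's theorem forces $\psi$ to be continuous into the discrete group $\bz$. Choosing a principal exhaustion $\{K_n\}$ with $\genus(K_n) \geq 3$ eventually (possible whenever $\genus(S) \geq 3$), Powell's theorem makes each $\pmap{K_n}$ perfect, so the directed union $\cpmap S = \varinjlim \pmap{K_n}$ is perfect and killed by $\psi$; continuity then upgrades this to $\psi(\clpmap S) = 0$. The residual $\genus(S)=2$ situations reduce to the classical finiteness of the abelianizations of the finite-type pure mapping class groups involved, which forces the same conclusion.

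Next I would invoke Theorem \ref{thm:decomposition} to fix a basis $\{v_i\}_{i=1}^r$ of $H_1^{sep}(\hat S, \bz)$ and commuting handle shifts $\{h_i\}_{i=1}^r$ with $\pmap S = \clpmap S \rtimes \prod_{i=1}^r \langle h_i\rangle$. Using Lemma \ref{lem:homology1}, I would arrange the basis along a principal exhaustion so that the supports $\Sigma_i$ of the $h_i$ are pairwise disjoint and locally finite in $S$. Local finiteness guarantees that every compact set meets only finitely many $\Sigma_i$, so the partial products $P_N := \prod_{i=1}^N h_i$ converge in the compact-open topology to some $P_\infty \in \pmap S$. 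Continuity of $\psi$ now forces the integer partial sums $\psi(P_N) = \sum_{i \leq N} \psi_i$, where $\psi_i := \psi(h_i)$, to stabilize in the discrete group $\bz$, so $\psi_i = 0$ for all sufficiently large $i$. This is a topological incarnation of Specker's theorem $\Hom(\bz^{\bn},\bz) = \bigoplus_{\bn}\bz$.

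Setting $v := \sum_i \psi_i v_i$ then gives a finite sum and hence a bona fide element of $H_1^{sep}(\hat S, \bz)$. Both $\hat\Phi(v)$ and $\psi$ kill $\clpmap S$, and property (3) of Proposition \ref{prop:homomorphism} together with the disjointness of the $\Sigma_i$ yield $\vp_{v_i}(h_j) = \delta_{ij}$, so the two homomorphisms agree on every $h_j$; continuity of both, together with the density of $\bigoplus_i \langle h_i\rangle$ in $\prod_i\langle h_i\rangle$ inside $\pmap S$, then forces them to coincide on all of $\pmap S = \clpmap S \cdot \kappa(H_S)$. The main obstacle I anticipate is the Specker-type step: one has to revisit the construction of Theorem \ref{thm:decomposition} carefully enough that the supports of the handle shifts are locally finite, so that partial products genuinely converge in $\pmap S$ and the completeness of the ambient Polish group can be translated into the algebraic finite-support conclusion for $(\psi_i)$.
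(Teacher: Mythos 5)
Your proposal is correct and follows the paper's skeleton exactly up to the last step: injectivity via Proposition \ref{prop:definition}, then Dudley's automatic continuity theorem combined with Powell's theorem (genus \( \geq 3 \)) or the finiteness of the relevant abelianizations (genus \( 2 \)) to show \( \psi(\clpmap S) = 0 \), and then the reduction via Theorem \ref{thm:decomposition} to computing \( \Hom\bigl(\prod_{i=1}^r \langle h_i \rangle, \bz\bigr) \). Where you genuinely diverge is in the infinite-rank case: the paper disposes of \( \Hom(\bz^{\bn}, \bz) = \bigoplus_{\bn} \bz \) by citing Specker's theorem as a purely algebraic fact, whereas you re-prove the needed instance topologically, using local finiteness of the supports \( \Sigma_i \) to get convergence of the partial products \( P_N \to P_\infty \) in the compact-open topology, continuity of \( \psi \) to force the partial sums \( \sum_{i \leq N} \psi(h_i) \) to stabilize, and density of \( \bigoplus_i \langle h_i \rangle \) in \( \prod_i \langle h_i \rangle \) to pin down \( \psi \) on all of \( \kappa(H_S) \). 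This route is sound: the construction in the proof of Theorem \ref{thm:decomposition} does yield supports that can be taken locally finite (the copies of \( Y \) are vertical strips over distinct disks in each \( Z_n \), so a compact set meets only finitely many), and since Dudley's theorem is already in play to kill \( \clpmap S \), leaning on it again costs nothing. What your version buys is self-containedness --- no appeal to Specker --- and it makes transparent \emph{why} only finitely many coordinates can survive (an infinite product of handle shifts is a genuine element of \( \pmap S \) at which an everywhere-nonzero functional would have to take a divergent value). What the paper's citation buys is independence from the geometry of the supports: Specker's theorem applies to the abstract group \( \bz^{\bn} \) regardless of how the \( h_i \) sit in \( S \), so no local-finiteness verification is needed. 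The one point you should make explicit if you write this up is the local-finiteness check itself, since pairwise disjointness of the \( \Sigma_i \) alone does not imply it.
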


\begin{proof}
By Proposition \ref{prop:homomorphism}, we already know that \( \hat \Phi \) is injective.
It is left to show surjectivity.
Let \( \vp \in H^1(\pmap S, \bz) \).
If the genus of \( S \) is at least 3, then \( \cpmap S \) is perfect as it is the direct limit of perfect groups (every finite-type pure mapping class group of a surface of genus at least 3 is perfect---see \cite[Theorem 5.2]{FarbPrimer}); hence, \( \vp( \cpmap S ) = 0 \).
If the genus is 2, then \( \clpmap S \) is the direct limit of groups whose abelianization is finite and the same argument applies (see \cite[Section 5.1.2]{FarbPrimer} or \cite[Theorem 5.1]{KorkmazLow}).
By Corollary \ref{cor:polish}, we can apply \cite[Theorem 1]{DudleyContinuity} to see that \( \vp \) is continuous, which allows us to conclude that \( \vp( \clpmap S)  = 0 \).
Therefore, by Corollary \ref{cor:semidirect}, we can identify \( H^1(\pmap S, \bz) \) with \( \Hom(\kappa(H_S), \bz) \).

Using the notation from Theorem \ref{thm:decomposition}, we write
\[
\kappa(H_S) = \prod_{i=1}^r \langle h_i \rangle,
\]
where \( r \) is the rank of \( H_1^{sep}(\hat S, \bz) \) and the \( h_i \) are handle shifts.
Recall from the proof of Theorem \ref{thm:decomposition} that there exists a basis \( \{v_i\}_{i=1}^r \) for \( H_1^{sep}(\hat S, \bz) \) such that
\[
\hat \Phi(v_i)(h_j) = \left\{
\begin{array}{ll}
1 & j = i \\
0 & j \neq i
\end{array}
\right.
\]
Applying standard properties of \( \Hom \) in the finite-rank case or an old result of Specker \cite{SpeckerAdditive} (see also \cite[Lemma 94.1]{FuchsInfinite}) in the case of infinite rank, we see that 
\[
\Hom(\kappa(H_S), \bz) = \bigoplus_{i=1}^r \langle \hat \Phi(v_i) \rangle.
\]
This implies that \( \hat \Phi \) is surjective and hence an isomorphism.
\end{proof}

\bibliographystyle{amsplain}
\bibliography{references}

\end{document}